\newtheorem{theorem}{Theorem}
\newtheorem{lemma}[theorem]{Lemma}
\newtheorem{corollary}[theorem]{Corollary}
\newtheorem{proposition}[theorem]{Proposition}
\theoremstyle{definition}
\newtheorem{observation}[theorem]{Remark}
\numberwithin{theorem}{section}
\numberwithin{equation}{section}
\DeclareMathOperator\co{{\it\mathbb C^n}}
\DeclareMathOperator\om{{\it\omega}}
\DeclareMathOperator\al{{\it\alpha}}
\DeclareMathOperator\de{{\it\delta}}
\subjclass[2010]{53C55, 35J96, 32U40}
\keywords{Complex Monge-Amp\`ere equation, Hermitian manifold, H\"older continuity}
\begin{document}

\title[Stability and regularity of solutions of Monge-Amp\`ere  equations]{Stability and regularity of solutions of the  Monge-Amp\`ere  equation on Hermitian manifolds} 
\author[S. Ko\l odziej and N.-C. Nguyen]{S\l awomir Ko\l odziej and Ngoc Cuong Nguyen} 
\date{}
\maketitle

\bigskip

\begin{abstract}
We prove stability of solutions of the complex Monge-Amp\`ere equation on compact Hermitian manifolds, when the right hand side varies in a bounded set in $L^p , p>1$
and it is  bounded away from zero. Such solutions are shown to be H\"older continuous. As an application we extend a recent result of  Sz\'ekelyhidi and Tosatti  
on K\"ahler-Einstein equation  from K\"ahler to  Hermitian manifolds.


\end{abstract}

\section{introduction}
\rm 
Ever since the solution of the Calabi conjecture by S.T. Yau \cite{Y}  the complex Monge-Amp\`ere equation played a prominent role
in complex geometry. In the last decades the weak solutions of the equation  on compact K\"ahler manifolds also found many applications
in the study of degenerations of canonical metrics and the limits of the K\"ahler-Ricci flow. In those settings, often, when a family
 of   K\"ahler metrics approaches the boundary of the K\"ahler cone their volume forms blow up, becoming unbounded, but still
remain bounded in $L^p$ for some $p>1.$ Then the stability estimates \cite{kolodziej03} \cite{kolodziej05} provide good control of the potentials 
of those metrics close to the singularity set. In particular the potentials are then H\"older continuous (see \cite{kolodziej08}, 
\cite{demailly-et-al}). 
Those results found  a number of applications in  the works on the  K\"ahler-Ricci flow  of Tian-Zhang \cite{tian-zhang06}, and Song-Tian           
\cite{ST1, ST2, ST3};  in Tosatti's description of the limits of families of Calabi-Yau metrics when the K\"ahler class
degenerates \cite{To1}, \cite{To2}; and in the  recent solution of the Donaldson-Tian-Yau conjecture. The proof of Chen-Donaldson-Sun  \cite{CDS2}  uses stability and H\"older continuity results  for approximation of cone K\"ahler-Einstein metrics by the smooth ones.
The pluripotential approach was further developed in the papers of  Eyssidieux-Guedj-Zeriahi \cite{EGZ09}, of those authors together  with
 Boucksom \cite{BEGZ}, and in several other articles. An up-to-date account of those developments can be found in the survey of  Phong,  Song and Sturm \cite{pss12}.

In this paper we are concerned with the complex Monge-Amp\`ere equation on compact Hermitian (non-K\"ahler) manifolds. In the eighties Cherrier \cite{cherrier87} made an attempt to prove the analogue of the Calabi-Yau theorem in this setting obtaining the result under a rather restrictive assumption. Further progress was made in \cite{GL10} and \cite{TW10a}.  Finally Tosatti and Weinkove \cite{TW10b} gave the complete proof. Those results came amid  a considerable growth of research activity in Hermitian geometric analysis.
In this context one should mention a paper by Fu and Yau \cite{FY08} with a construction of some non-trivial solutions of the Strominger system. In relation to this problem
Fu-Wang-Wu \cite{fww1} introduced, form-type Calabi-Yau equation and solved it \cite{fww2}  for metrics of nonnegative orthogonal bisectional curvature.
Tosatti and Weinkove \cite{TW13} were able to solve the equation without the extra assumption. In the equation there are two reference metrics. If both are non-K\"ahler,
the solution, which gives the confirmation of the Gauduchon conjecture (analogue of the one of Calabi for Gauduchon metrics), was found by  Sz\'ekelyhidi, Tosatti and Weinkove \cite{SzTW}.
Popovici \cite{popovici13}  considered a variant of this equation studying  moduli spaces of Calabi-Yau $\partial\bar{\partial}$-manifolds. 
The  Monge-Amp\`ere equation  is closely related to  the Chern-Ricci flow, intensively studied recently in the  papers of Gill \cite{gill11, gill13}, Tosatti-Weinkove \cite{TW12a, TW12b}, Tosatti-Weinkove-Yang \cite{TWYang13}. 
Chiose \cite{chiose13} and the second named author \cite{cuong15}  used the solutions of the equation to prove a conjecture of Demailly and Paun \cite{DP04}  in some special cases.

The analysis of geometrically meaningful equations on Hermitian manifolds is
harder than in the K\"ahler case due to the torsion terms which are difficult to handle in the estimates. This accounts for a somewhat technical character of the proofs.

\it Results. \rm
Throughout the paper $(X, \omega )$ will denote a compact manifold  $X$ of dimension $n>1$, with a Hermitian metric $\omega .$
S. Dinew and the first author obtained in \cite{DK12} $L^{\infty }$ a priori estimates for the complex Monge-Amp\`ere equation
$$
	(\omega + dd^c u)^n =const.  f\omega^n, 
$$
with the nonnegative  right hand side in $ L^p(\omega^n)$, $p>1.$ The weak continuous solutions, under the same assumption,  were obtained
by the authors in \cite{KN}. Up till now the uniqueness of solutions, normalised, for instance,  by $\sup _X u=0$, has not been established.
It follows, for strictly positive $f$, from the main result of this paper, which is the following stability statement.

\medskip
\noindent {\bf Theorem A.} {\em 
Let $0\leq f, g\in L^p(\omega^n)$, $p>1$, be such that $\int_X f \omega^n >0$, $\int_X g\omega^n>0$. Consider two continuous $\omega$-psh solutions of the complex Monge-Amp\`ere equation 
\[
	(\omega + dd^c u)^n = f\omega^n, \quad 
	(\omega + dd^c v)^n = g\omega^n,
\] 
with 
$
	\sup_X u = \sup_X v =0.
$
Assume that
\[
	 f \geq c_0>0   \ \ (c_0   \ \ \text{a constant}).
\] 
Fix $0< \alpha < \frac{1}{n+1}$. Then, there exists $C = C(c_0, \alpha, \|f\|_p, \|g\|_p)$ such that
\[
	\|u-v\|_\infty \leq C \|f-g\|_p^\alpha.
\]
}

As compared to the corresponding theorem for K\"ahler manifolds we have here an additional hypothesis that $f$ is nondegenerate ($f>c_0 >0$).
It would be very desirable to remove or weaken it. Note, however, that on non-K\"ahler manifolds the notion of  "stability" itself has an extra dimension, since $\int _X f\om ^n$ is no longer fixed. It means that if we consider a small  perturbation $\tilde{f}$ of $f$ on the right hand side   then the Monge-Amp\`ere equation has a solution for $c\tilde{f}$ and this constant is not determined by $\om .$ 
On the bright side, the H\"older exponent in Theorem A is independent of $(X, \omega )$ and it is almost as good as in the K\"ahler case (see \cite{dinew-zhang10}), except that we consider
$L^p $ norm of $(f-g)$ instead of $L^1$ norm. We next prove the result with $L^1$ norm in Theorem \ref{stability-l1}, but then the exponent is worse.
The proof of Theorem A required a completely new method. 

With the stability at our disposal we could  prove two other theorems. First, the H\"older continuity of solutions of the Monge-Amp\`ere equation on compact Hermitian manifolds.

\medskip
\noindent {\bf Theorem B.} {\em 
 Consider the  solution $u$ of the complex Monge-Amp\`ere equation
$$\om _u^n = f\om ^n , $$
on  $(X, \om )$  a compact  Hermitian manifold, with $ f > c_0 > 0, \|f\|_p <\infty.$
Then for any    $\alpha < \frac{2}{p^* n (n+1)+1}$    the function $u$
is H\"older continuous, with H\"older exponent $\alpha .$
}
\medskip

Again we have the hypothesis that $f$ is nondegenerate, which is not needed on K\"ahler manifolds. The H\"older exponent is worse than in the K\"ahler counterpart,  (roughly) by factor $1/n.$

The second application of the stability result is an extension of a theorem of 
 Sz\'ekelyhidi and Tosatti  \cite{SzTo11} to the case of   compact Hermitian manifolds.

\medskip
\noindent {\bf Theorem C.} {\em
Let $(X,\omega)$ be a compact $n$-dimensional Hermitian manifold. 
Suppose that $u\in PSH(\omega)\cap L^\infty(X)$ is a
solution of the equation
\[ 
	(\omega + dd^c u)^n = e^{-F(u,z)}\omega^n
\]
in the weak sense of currents, where $F:\mathbb{R}\times X\to \mathbb{R}$ is smooth. 
Then $u$ is smooth.
} 
\medskip
In \cite{SzTo11} the authors obtained this for K\"ahler manifolds to derive that
if $X$ is a Fano manifold and  $\omega$ represents the first Chern class, then any  K\"ahler-Einstein current with bounded potentials is  smooth.
 Nie [Nie13] recently generalised this result to special compact Hermitian manifolds, and observed that her  higher order estimates
and a stability result would give the theorem above. We have just provided this result.

\bigskip

{\bf Acknowledgement.} The authors would like to thank S\l awomir Dinew for useful comments, in particular for a quicker proof of Lemma \ref{bbg}. We are also grateful to anonymous referees for the comments  and suggestions which improved the exposition. The research was supported by NCN grant 2013/08/A/ST1/00312. 
\section{preliminaries}
\label{S1}

In this section we give some auxiliary results used later for the proof of the stability estimates and uniqueness in Section~\ref{S2}. 
Some of them are interesting in themselves. 

Let $(X, \omega)$ be a compact $n$-dimensional Hermitian manifold.  $PSH(\omega)$ stands for the set of all $\omega$-plurisubharmonic ($\omega$-psh) functions on $X$. If $u\in PSH(\omega)$, then 
\[
	\omega_u:= \omega + dd^c u\geq 0. 
\]
We shall denote throughout, by $B$
the "curvature" constant $B>0$  satisfying
\begin{equation}
\label{curv-const}
	-B \omega^2 \leq dd^c \omega \leq B \omega^2, \quad
	-B \omega^3 \leq d\omega \wedge d^c \omega \leq B \omega^3.
\end{equation}
For $r\geq 1$, its conjugate number will be denoted by $r^*$, and we write $L^r(\omega^n)$ for $ L^r(X, \omega^n)$, and
\[
	\|.\|_r = \left(\int_X |.|^r \omega^n \right)^\frac{1}{r},  \quad
	\|.\|_\infty = \sup_X |.|.
\]
The volume of a Borel set $E\subset X$ with respect to the metric $\omega$ is given by
\[
	Vol_\omega(E) := \int_E \omega^n,
\]
and without loss of generality we normalise the volume of $X$ to be $1$, i.e.
\[
	Vol_\omega(X) = \int_X \omega^n =1.
\]
We always denote by $C$ a  uniform positive constant  that depends only on $X, \omega$ and the dimension $n$. It  may differ from place to place.

\medskip

Because  $\omega$  is not closed, the Monge-Amp\`ere operator does not preserve the volume of the manifold. 
Therefore, given $f$ on the right hand side of the  Monge-Amp\`ere equation the solution possibly exists for $cf$ ($c$ a positive constant).
If $c=1$ then we call $f$ \it MA-admissible. \rm  
 Thanks to results in \cite{KN}, \cite{cuong15} we are able to get {\em a priori} bounds for the constant $c$. 

\begin{lemma}
\label{const-comp}
If $u, v \in PSH(\omega) \cap  L^\infty(X)$ satisfy
\[
	\omega_u^n \leq  c \; \omega_v^n
\]
for some $c>0$, then $c \geq 1$.
\end{lemma}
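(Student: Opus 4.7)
My strategy is proof by contradiction: suppose $c < 1$, and derive a contradiction by a maximum-principle argument combined with a small perturbation.

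\textbf{Reduction to strict positivity of $\omega_v$.} First I would reduce to the case where $\omega_v$ is strictly positive. Set $v_\eta := (1-\eta)v$ for small $\eta \in (0,1)$; then
\[
	\omega_{v_\eta} = \eta\omega + (1-\eta)\omega_v \geq \eta\omega > 0,
\]
so $\omega_{v_\eta}^n \geq \eta^n\omega^n > 0$ pointwise, and moreover $\omega_{v_\eta}^n \geq (1-\eta)^n\omega_v^n$. The hypothesis thus becomes $\omega_u^n \leq c(1-\eta)^{-n}\omega_{v_\eta}^n$, and it suffices to show the new constant $c(1-\eta)^{-n} \geq 1$ for every $\eta \in (0,1)$; sending $\eta \to 0^+$ then yields $c \geq 1$. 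So I assume $\omega_v > 0$ throughout, and in particular $\int_X \omega_v^n > 0$.

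\textbf{The minimum-principle argument.} For smooth $u, v$ the argument is pointwise: pick $y_0 \in X$ where $u - v$ attains its minimum (possible by compactness). At $y_0$ the Hermitian form $dd^c(u-v)(y_0)$ is positive semidefinite, so $\omega_u(y_0) \geq \omega_v(y_0) > 0$, whence $\omega_u^n(y_0) \geq \omega_v^n(y_0) > 0$. Combined with the hypothesis $\omega_u^n(y_0) \leq c\,\omega_v^n(y_0)$ this forces $c \geq 1$.

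\textbf{Continuous case.} For merely continuous $u, v$ I would replace the pointwise argument by its integrated analogue using the Hermitian version of the Bedford--Taylor comparison principle from \cite{KN}. For each $k \in \mathbb{R}$, on the open set $\Omega_k := \{u + k < v\}$ it gives
\[
	\int_{\Omega_k} \omega_v^n \leq \int_{\Omega_k} \omega_u^n + R,
\]
with remainder $R$ controlled by the curvature constant $B$ from \eqref{curv-const} and $\|u-v\|_\infty$. Combined with $\omega_u^n \leq c\,\omega_v^n$ this yields $(1-c)\int_{\Omega_k}\omega_v^n \leq R$. Taking $k$ sufficiently negative so that $\Omega_k = X$, the left-hand side is at least $(1-c)\eta^n > 0$, contradicting the smallness of $R$ after suitable calibration of parameters. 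The main obstacle will be precisely this error term $R$ in the Hermitian comparison principle: unlike the K\"ahler case, where the Bedford--Taylor comparison is sharp, the Hermitian version unavoidably carries a torsion-induced remainder, and the precise quantitative form from \cite{KN} or \cite{cuong15} must be invoked and tuned so that the remainder is absorbed into the positive lower bound — this is the reason the lemma sits at the preliminaries and leans on those earlier works.
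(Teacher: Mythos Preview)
The paper does not prove this lemma in-house: it simply defers to \cite[Lemma 2.3, Corollary 2.4]{cuong15}. Your reduction to strictly positive $\omega_v$ is fine, and your smooth-case argument via the pointwise minimum principle is correct and standard.

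The continuous case, however, has a genuine gap. The modified comparison principle of \cite[Theorem 0.2]{KN} --- see how it is invoked in the paper's Proposition~\ref{min-prin} --- applies to sublevel sets of the form $\{u<(1-\varepsilon)v+m_\varepsilon+t\}$ and carries a multiplicative error factor $\bigl(1+\tfrac{Ct}{\varepsilon^n}\bigr)$; this factor can be made close to $1$ only by sending $t\to 0$, which forces the sublevel set to shrink to a small neighbourhood of the minimum set of $u-(1-\varepsilon)v$. Your plan of taking $k$ so negative that $\Omega_k=X$ destroys the small parameter: there is no boundary, no $t$, and hence no ``remainder $R$'' that can be calibrated away. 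The inequality simply does not hold in that global form on a non-K\"ahler manifold (indeed, $\int_X\omega_u^n$ and $\int_X\omega_v^n$ are unrelated, which is exactly why the lemma is nontrivial here).

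The argument in \cite{cuong15}, mirrored in the proof of Proposition~\ref{min-prin} of this paper, localises instead: assume $c<1$, set $S_\varepsilon=\min_X\{u-(1-\varepsilon)v\}$, and apply the modified comparison principle on the nonempty open sets $U(\varepsilon,t)=\{u<(1-\varepsilon)v+S_\varepsilon+t\}$ for $0<t<\varepsilon^3/16B$. Since $\omega_{(1-\varepsilon)v}^n\geq (1-\varepsilon)^n\omega_v^n\geq (1-\varepsilon)^n c^{-1}\omega_u^n$ and $\int_{U(\varepsilon,t)}\omega_u^n>0$, one obtains $(1-\varepsilon)^n c^{-1}\leq 1+\tfrac{Ct}{\varepsilon^n}$ for all small $t$, hence $(1-\varepsilon)^n c^{-1}\leq 1$; letting $\varepsilon\to 0$ gives $c\geq 1$, a contradiction. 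Your proposal should be reworked along these local lines rather than the global one.
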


\begin{proof}
See \cite[Lemma 2.3, Corollary 2.4]{cuong15}.
\end{proof}

The mixed form type inequality \cite{kolodziej05}, \cite{dinew09} has been extended to the Hermitian setting in \cite{cuong15}. It will be used many times in the sequel.

\begin{lemma}[mixed form type inequality]\label{mixed-type-ineq}
Let $0 \leq f, g \in L^1(\omega^n)$ and $u, v \in PSH(\omega) \cap L^\infty(X)$. Suppose that $\omega_u^n \geq f \omega^n$ and $\omega_v^n  \geq g \omega^n$ on $X$. Then for $ k = 0, ...,  n$
\[
	\omega_u^k \wedge \omega_v^{n-k} 
		\geq f^{\frac{k}{n}} \, g^{\frac{n-k}{n}} \omega^n \quad \mbox{ on } \quad 
	X.
\]
In particular, for $0< \delta<1$,
\[
	\omega_{\delta u + (1-\delta) v}^n 
	\geq  \left[  \delta f^\frac{1}{n} 
			+ (1-\delta) g^\frac{1}{n} \right]^n \omega^n
	\quad \mbox{ on } \quad 
	X.
\]
\end{lemma}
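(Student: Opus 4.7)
The plan is to establish the pointwise mixed-form inequality by approximation plus linear algebra, and then deduce the convex-combination statement by binomial expansion.

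\textbf{Reduction to the smooth case.} Since $u$ and $v$ are bounded $\omega$-psh, I would first regularize them to obtain decreasing sequences $u_j \downarrow u$ and $v_j \downarrow v$ of smooth $(1+\varepsilon_j)\omega$-plurisubharmonic functions with $\varepsilon_j \downarrow 0$ (the Demailly-type approximation adapted to the Hermitian setting). The weak continuity of mixed Monge-Amp\`ere currents along such decreasing sequences, part of the Hermitian Bedford--Taylor theory developed in \cite{KN, cuong15}, then reduces the assertion to its smooth, pointwise version. On the smooth side one can also absorb the loss $\varepsilon_j$ by letting $\varepsilon_j\to 0$ at the end and using $f^{k/n}g^{(n-k)/n}\in L^1$.

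\textbf{The pointwise inequality.} Fix $x \in X$ and choose unitary coordinates in which $\omega(x) = i\sum_j dz_j \wedge d\bar{z}_j$ and $\omega_u(x)$ is diagonal with eigenvalues $\mu_1,\ldots,\mu_n \ge 0$; write $B$ for the Hermitian matrix of $\omega_v(x)$ in these coordinates. Then $\omega_u^n/\omega^n = \mu_1 \cdots \mu_n \ge f(x)$ and $\omega_v^n/\omega^n = \det B \ge g(x)$. A direct calculation identifies
\[
\frac{\omega_u^k \wedge \omega_v^{n-k}}{\omega^n} = D(A^{[k]}, B^{[n-k]}),
\]
where $A = \mathrm{diag}(\mu_1,\ldots,\mu_n)$ and $D$ denotes the mixed discriminant normalized so that $D(A,\ldots,A) = \det A$. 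The Alexandrov--Fenchel inequality for mixed discriminants, equivalent to the polarized form of Minkowski's determinant inequality, yields
\[
D(A^{[k]}, B^{[n-k]}) \ge (\det A)^{k/n} (\det B)^{(n-k)/n} \ge f(x)^{k/n}\, g(x)^{(n-k)/n},
\]
which is the desired bound.

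\textbf{The convex-combination form.} Expanding
\[
\omega_{\delta u + (1-\delta) v}^n = (\delta \omega_u + (1-\delta)\omega_v)^n = \sum_{k=0}^n \binom{n}{k} \delta^k (1-\delta)^{n-k}\, \omega_u^k \wedge \omega_v^{n-k}
\]
and applying the pointwise inequality termwise, the right-hand side equals $[\delta f^{1/n} + (1-\delta) g^{1/n}]^n \omega^n$ by Newton's binomial theorem. (Alternatively, this special case follows directly from the concavity of $M \mapsto (\det M)^{1/n}$ on positive Hermitian matrices, combined with Lemma~\ref{mixed-type-ineq}'s hypothesis applied in the diagonalizing coordinates.) The main obstacle is the regularization step: on a non-K\"ahler Hermitian manifold, bounded $\omega$-psh functions cannot in general be approximated by smooth $\omega$-psh functions, so one must work with the slightly enlarged class of $(1+\varepsilon)\omega$-psh functions and pass to the limit using the weak continuity of mixed Monge-Amp\`ere currents.
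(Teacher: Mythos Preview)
The paper does not prove this lemma itself; it simply refers to \cite[Lemma~1.9]{cuong15}. So there is no in-paper argument to compare against directly, but your proposal has a genuine gap.

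The problem is in your reduction to the smooth case. Regularizing $u$ to a decreasing sequence of smooth $(1+\varepsilon_j)\omega$-psh functions $u_j$ does \emph{not} preserve the hypothesis $\omega_u^n \ge f\omega^n$: there is no reason why $\omega_{u_j}^n$ should dominate $f\omega^n$, or any approximation of it, pointwise. Weak convergence of $\omega_{u_j}^k \wedge \omega_{v_j}^{n-k}$ to $\omega_u^k \wedge \omega_v^{n-k}$ is fine, but the linear-algebra step you want to apply at a point $x$ requires knowing $\omega_{u_j}^n(x)/\omega^n(x)\ge f(x)$, and regularization gives you no such control. This is exactly why the local version of this inequality (Ko\l odziej \cite{kolodziej05}, Dinew \cite{dinew09}) is nontrivial and is \emph{not} proved by approximation plus Alexandrov--Fenchel: the standard route is an indirect argument via the comparison principle. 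The Hermitian extension in \cite{cuong15} reduces to that local pluripotential result by working in a coordinate chart where $\omega$ has a local potential, so that $u+\rho$ and $v+\rho$ are ordinary psh functions and Dinew's inequality applies.

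Your pointwise mixed-discriminant argument and the binomial expansion for the convex-combination statement are correct when $u$ and $v$ are smooth; the second part also follows immediately from the first by expanding $(\delta\omega_u+(1-\delta)\omega_v)^n$, as you note. But the passage from smooth to bounded $\omega$-psh is the real content here, and it needs a different mechanism than the one you propose.
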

\begin{proof}
See \cite[Lemma 1.9]{cuong15}.
\end{proof}

The uniqueness of the  Monge-Amp\`ere equation  in the realm of smooth functions is proven in \cite{TW10b}. A priori there could exist some different continuous weak solutions as constructed in \cite{KN}.
However, if the right hand side is smooth,  any continuous solution to the same right hand side  is also smooth. It follows from the following lemma.

\begin{lemma}
\label{uniqueness-smooth}
Let $ \rho \in PSH(\omega) \cap C(X)$. Assume that   
$
	\omega^n \leq \omega_\rho^n
$
as two measures. Then $\rho$ is a constant. 
\end{lemma}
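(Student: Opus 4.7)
I would argue by contradiction: suppose $\rho$ is nonconstant. After adding a constant, we may assume $\sup_X \rho = 0$, so $\inf_X \rho < 0$.

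A first attempt is to apply Lemma~\ref{const-comp} directly to the pair $(u,v) = (0,\rho)$: the hypothesis $\omega_u^n = \omega^n \leq \omega_\rho^n = \omega_v^n$ is met with constant $c=1$, so the lemma gives only the trivial conclusion $c \geq 1$. To extract more, I would use the mixed-form inequality (Lemma~\ref{mixed-type-ineq}) applied to the $\omega$-psh functions $0$ and $\rho$ (with $f=g\equiv 1$, since $\omega_0^n = \omega^n$ and by hypothesis $\omega_\rho^n \geq \omega^n$): for every $t \in [0,1]$ the interpolant $\rho_t := t\rho$ is continuous $\omega$-psh and satisfies
\[
\omega_{\rho_t}^n = \bigl((1-t)\omega + t\omega_\rho\bigr)^n \geq \omega^n,
\]
so the whole family $\{\rho_t\}_{t \in [0,1]}$ fulfills the same hypothesis as $\rho$.

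The heart of the argument is to promote the measure inequality $\omega_\rho^n \geq \omega^n$ to the pointwise equality $\omega_\rho^n = \omega^n$. To do this I would apply Lemma~\ref{const-comp} in the reverse direction: using the Brunn--Minkowski-type bound $\omega_{\rho_t}^n \geq t^n \omega_\rho^n$, set $(u,v) = (\rho, \rho_t)$ to obtain $\omega_\rho^n \leq t^{-n} \omega_{\rho_t}^n$, giving $t^{-n}\geq 1$, and then let $t \nearrow 1$ and combine with the direct inequality $\omega^n \leq \omega_{\rho_t}^n$. A careful limit argument of this type, using the rigidity inherent in Lemma~\ref{const-comp}, should force $\omega_\rho^n = \omega^n$ as Borel measures.

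Once this measure equality is in hand, the right-hand side is smooth and strictly positive, so standard elliptic regularity for the complex Monge--Amp\`ere operator lifts the continuous $\omega$-psh solution $\rho$ to a smooth one. Both $\rho$ and the trivial solution $0$ are then smooth solutions of $\omega_u^n = \omega^n$, and the smooth uniqueness up to constants due to Tosatti--Weinkove~\cite{TW10b} implies $\rho \equiv \mathrm{const}$, contradicting the standing assumption. The main obstacle I anticipate is the rigidity step: Lemma~\ref{const-comp} alone delivers only constant-level comparisons, so extracting measure-level equality requires a careful two-sided comparison and a limit argument that exploits the continuity of $\rho$ in the Hermitian setting, rather than direct integration (which is unavailable because $\omega$ is not closed).
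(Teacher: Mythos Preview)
Your proposal has a genuine gap at the ``rigidity step,'' and the gap is not merely cosmetic. Lemma~\ref{const-comp} says only that if $\omega_u^n \leq c\,\omega_v^n$ then $c \geq 1$; it carries no rigidity content whatsoever in the case $c=1$. Your two applications of it yield the tautologies $1 \geq 1$ and $t^{-n} \geq 1$ for $t \leq 1$, and no limit of tautologies will produce the measure equality $\omega_\rho^n = \omega^n$. You acknowledge this obstacle at the end, but you do not overcome it, and I do not see a route to do so with the tools you invoke.

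There is a second, subtler problem. Even if you had $\omega_\rho^n = \omega^n$, your appeal to ``standard elliptic regularity'' to upgrade the continuous solution $\rho$ to a smooth one is circular in this paper's logical order: the very purpose of the present lemma (see Remark~\ref{ob-us}) is to establish that a continuous solution with smooth right-hand side coincides with the Tosatti--Weinkove smooth solution. You cannot assume that conclusion to prove the lemma.

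The paper's proof takes a completely different and much shorter route that avoids both difficulties. It never attempts to show $\omega_\rho^n = \omega^n$. Instead, from the mixed-form inequality one gets $\omega_\rho \wedge \omega^{n-1} \geq \omega^n$, i.e.\ $dd^c\rho \wedge \omega^{n-1} \geq 0$. Multiplying by the Gauduchon factor $e^G$ (so that $dd^c(e^G\omega^{n-1})=0$) and integrating by parts gives $\int_X dd^c\rho \wedge e^G\omega^{n-1} = 0$, forcing $dd^c\rho \wedge \omega^{n-1} = 0$. Thus $\rho$ is harmonic on a compact manifold, hence constant. This Gauduchon trick is the key idea you are missing; it substitutes for the unavailability of direct integration when $\omega$ is not closed.
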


\begin{proof}
Let $G\in C^\infty(X)$ be the Gauduchon function of the metric $\omega$, i.e  $dd^c (\omega^{n-1} e^G) = 0$. The mixed forms type inequality (Lemma~\ref{mixed-type-ineq})
implies that
\[
	\omega_\rho \wedge \omega^{n-1} \geq \omega^n. 
\]
Therefore, $\omega_\rho \wedge \omega^{n-1} e^G \geq \omega^n e^G$. It follows that
$dd^c \rho \wedge \omega^{n-1} e^G \geq 0$. However, by the Stokes theorem and the Gauduchon condition
\[
	\int_X dd^c \rho \wedge \omega^{n-1} e^G = 
	\int_X \rho \, dd^c (\omega^{n-1} e^G)
	=0.
\]
 Hence,
\[
	dd^c \rho \wedge \omega^{n-1} e^G = 0 \Rightarrow
	dd^c \rho \wedge \omega^{n-1} = 0.
\]
So $\rho\in PSH(\omega) \cap C(X)$ is a harmonic function on a compact
manifold and therefore it  is a constant.
\end{proof}

One of the steps to prove the stability estimate in Section~\ref{S2} 
is the construction of the barrier function having large Monge-Amp\`ere mass on some small set. The following result will help to do this. It is essentially contained in \cite{KN},  although  not exactly in this form.

\begin{proposition}
\label{barrier-funct} Fix a positive constant $A_0$ and consider functions 
 $0\leq f \in L^p(\omega^n)$, $p>1$,  such that $\int_X f \omega^n>0$ and $\|f\|_p <A_0$. 
Suppose one can solve the complex Monge-Amp\`ere equation
\[
	(\omega + dd^c w)^n = c  f \omega^n, \quad \sup_X w =0,
\]
where $w\in PSH(\omega) \cap C(X)$ and $c>0$. 
Then there exists a constant $V_{min}>0$ depending only on $X, \omega, A_0$ such that whenever
\begin{equation}
\label{mass-c1}
		\int_X f \omega^n \leq  2V_{min},
\end{equation}
we have $c\geq 2^n$. 
\end{proposition}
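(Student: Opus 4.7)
\emph{Plan.} I would argue by contradiction: assume $c<2^n$. Combined with \eqref{mass-c1} this gives
\[
\int_X\omega_w^n \;=\; c\int_X f\,\omega^n \;<\; 2^{n+1}\,V_{\min}.
\]
The whole task is then to manufacture a positive lower bound $\int_X\omega_w^n\geq V_0$ with $V_0$ depending only on $X$, $\omega$ and $\|f\|_p$. Once $V_0$ is secured, taking $V_{\min}:=V_0/2^{n+1}$ makes the two inequalities incompatible and closes the argument.

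\emph{Step 1: sup-norm bound on $w$.} Under the working assumption $c<2^n$, the density of the equation satisfies $\|cf\|_p\leq 2^n\|f\|_p$. The $L^\infty$ estimate of Dinew-Ko\l odziej \cite{DK12}, in the Hermitian form proved in \cite{KN}, therefore yields a uniform bound $\|w\|_\infty\leq M_0$ with $M_0=M_0(\|f\|_p,X,\omega)$.

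\emph{Step 2: lower bound on the Monge-Amp\`ere mass.} Let $G\in C^\infty(X)$ be the Gauduchon function, $dd^c(\omega^{n-1}e^G)=0$, used already in Lemma~\ref{uniqueness-smooth}. Stokes' theorem gives the closed identity
\[
\int_X\omega_w\wedge\omega^{n-1}e^G \;=\; \int_X\omega^n e^G,
\]
so the mixed mass $\int_X\omega_w\wedge\omega^{n-1}$ is pinned down by $\omega$ alone, independently of $w$ and $c$. To transfer this control to $\int_X\omega_w^n$ I would iterate the identity
\[
\int_X\omega_w^k\wedge\omega^{n-k}
\;=\; \int_X\omega_w^{k-1}\wedge\omega^{n-k+1}+\int_X dd^cw\wedge\omega_w^{k-1}\wedge\omega^{n-k}
\]
for $k=1,\dots,n$, estimating each torsion remainder by integrating by parts a second time and bounding the resulting $dd^c\omega^{n-k}$, $d\omega\wedge d^c\omega\wedge\omega_w^{k-2}\wedge\omega^{n-k}$ terms via \eqref{curv-const} and the sup-norm bound $\|w\|_\infty\leq M_0$, while using the mixed form type inequality (Lemma~\ref{mixed-type-ineq}) to absorb the mixed products that appear. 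Telescoping gives the desired $V_0=V_0(M_0,X,\omega)=V_0(\|f\|_p,X,\omega)>0$.

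\emph{Main obstacle.} On a K\"ahler manifold $\int_X\omega_w^n=\int_X\omega^n$ is cohomological and the lower bound is automatic; the entire technical content of the proposition sits in the Hermitian correction terms generated at each integration by parts in Step~2. Controlling these torsion errors uniformly, in terms of nothing more than $\|w\|_\infty$ and the curvature constant $B$, is the delicate point, and is precisely what the machinery of \cite{KN} is built to do. I would import those estimates rather than reprove them, so that the proposition emerges as a clean quantitative repackaging: it is the factor $2^n$ lost in going from the mixed mass bound to the full mass bound that dictates both the threshold $2V_{\min}$ and the conclusion $c\geq 2^n$.
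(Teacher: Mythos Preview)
Your overall contradiction strategy is the right one and matches the paper: assume $c<2^n$, deduce $\int_X\omega_w^n<2^{n+1}V_{\min}$, and confront this with a positive lower bound $V_0$ on $\int_X\omega_w^n$ depending only on $X,\omega,\|f\|_p$. Step~1 is also fine. The gap is in Step~2.

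Telescoping and integrating by parts gives only
\[
\Big|\int_X\omega_w^n-\int_X\omega^n\Big|\;\leq\; CB\,\|w\|_\infty\cdot\bigl(\text{mixed masses of the form }\textstyle\int_X\omega_w^{j}\wedge\omega^{n-j}\bigr),
\]
and those mixed masses are themselves of size $C(1+\|w\|_\infty)^{n-1}$. So the torsion remainder is of order $M_0(1+M_0)^{n-1}$; for $M_0$ not small (i.e.\ for $\|f\|_p$ not small) this exceeds $\int_X\omega^n=1$ and the resulting ``lower bound'' is negative, hence vacuous. The mixed form type inequality gives pointwise \emph{lower} bounds on wedge products and cannot be used to cap the torsion errors from above. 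The Gauduchon identity pins down only the single integral $\int_X\omega_w\wedge\omega^{n-1}e^G$; promoting this to control of $\int_X\omega_w^n$ is precisely the difficulty, and it is not a clean ``factor $2^n$'' loss---the constant $2^n$ in the statement is a chosen target, not forced by any such mechanism.

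The paper produces $V_0$ by a completely different route. From $c\leq 2^n$ one gets $\omega_w^n\leq 2^nf\omega^n$, so by H\"older and the volume--capacity inequality the measure $\omega_w^n$ is dominated by capacity with an admissible function $h(x)=Ce^{ax}/(2^n\|f\|_p)$ in the sense of \cite{KN}. The capacity machinery of \cite[Theorem~5.3 and Eq.~(5.12)]{KN} then gives, for a small fixed $t_0>0$,
\[
t_0^n\,cap_\omega\bigl(\{w<\inf_Xw+t_0\}\bigr)\;\leq\;C\int_X\omega_w^n
\qquad\text{and}\qquad
cap_\omega\bigl(\{w<\inf_Xw+t_0\}\bigr)\;\geq\;\hbar(t_0)>0,
\]
where $\hbar$ is the inverse of the function $\kappa$ built from $h$. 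Combining yields $\int_Xf\omega^n\geq t_0^n\hbar(t_0)/(2^nC)$ directly, and $V_{\min}$ is defined from this quantity. The missing idea, then, is that the lower bound comes from the capacity of a sublevel set of $w$ near its infimum, not from comparing $\int_X\omega_w^n$ to $\int_X\omega^n$ through Stokes.
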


\begin{proof}
Suppose  $c \leq 2^n$. We shall see that this  leads to a contradiction for some positive $V_{min}$. So we  have
\[
	\omega_w^n \leq 2^n  f \omega^n.
\]
Therefore, by H\"older's inequality, for any Borel set $E\subset X$,
\[
	\int_E \omega_w^n 
\leq 		2^n \| f \|_p \left[Vol_\omega(E) \right]^\frac{1}{p*}.
\]
The volume-capacity inequality \cite[Corollary 2.4]{DK12} or \cite[Proposition 5.1]{KN} implies that the Monge-Amp\`ere measure $\omega_w^n$ satisfies the inequality  \cite[Eq. (5.2)]{KN} for the admissible (in the sense used in \cite{KN})  function 
\[
	h(x) = \frac{C e^{a x}}{2^n A_0} ,
\]
where $a>0$ is a universal number depending only on $p, X, \omega$.
Inequality \cite[Eq. (5.12)]{KN} for $0< t\leq  \frac{1}{3} \min\{\frac{1}{2^n}, \frac{1}{2^7 B}\}$ then gives:
\[
	t^n cap_\omega(\{w < S + t\}) \leq C \int_{\{\rho < S + 2t\}} \omega_w^n
	\leq C \int_X  2^n  f \omega^n ,
\]
where $S= \inf_X w$ and $C>0$ depends only on $n,B$. It implies that
\begin{equation}
\label{mc-eq2}
	\frac{t^n}{2^n C} \, cap_\omega(\{w < S + t\}) \leq \int_X  f \omega^n.
\end{equation}
Let us recall from \cite[Theorem 5.3]{KN} the formula for the function $\kappa(x)$   corresponding to $\omega_w^n$, 
\[
	\kappa(s^{-n}) = 4 C_n \left\{\frac{1}{[h(s)]^\frac{1}{n}} 
			+ \int_s^\infty \frac{dx}{x [h(x)]^\frac{1}{n}} \right\}.
\]
It is defined on $(0, cap_\omega(X))$.
Since $\kappa (x)$ is an increasing function it has the inverse $\hbar(x)$. It follows from \cite[Theorem 5.3]{KN} that for $0< t\leq  \frac{1}{3} \min\{\frac{1}{2^n}, \frac{1}{2^7 B}\}$ one has
\[
	\hbar (t) \leq cap_\omega (\{w < S +t\}). 
\]
Coupling this  with \eqref{mc-eq2} we obtain
\begin{equation}
\label{l1-apriori-bound}
	\int_X f \omega^n \geq \frac{t^n \hbar(t)}{2^n C}.
\end{equation}
Define 
\begin{equation}
\label{v-min}
	V_{min}:= \frac{t_0^n}{2^{n+2} C\hbar(t_0)}>0, \quad 
	t_0 = \frac{1}{6} \min\{\frac{1}{2^n}, \frac{1}{2^7 B}\}.	
\end{equation}
Then, the inequality \eqref{l1-apriori-bound} and the assumptions would give a contradiction
\[
	2 V_{min} \geq \int_X f \omega^n \geq 4 V_{min}>0.
\]
Thus the proposition is proven.
\end{proof}

The following minimum principle is inspired by \cite[Theorem A]{BT76}, which was stated for a bounded open set in $\co$. 
In the Hermitian setting it requires a different proof.

\begin{proposition}[minimum principle]
\label{min-prin}
Let $U\subset \subset X$ be a non-empty open set. Let $u, v\in PSH(\omega) \cap C(X)$ be such that 
\[
	  c\, \omega_u^n \leq \omega_v^n \quad \mbox{on } U
\]
for some $c>1$. Then, 
\[	
	\min\{u(x)-v(x): x \in \overline{U}\} = \min\{u(x) - v(x): x\in \partial U\}.
\]
\end{proposition}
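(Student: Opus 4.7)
I would argue by contradiction, assuming that $m := \min_{\overline U}(u-v) < s := \min_{\partial U}(u - v)$. Fix $t \in (m, s)$ and set
\[
E := \{ x \in U : u(x) - v(x) < t \}.
\]
By the continuity of $u - v$ and the choice $t < s$, $E$ is open, nonempty (it contains any interior minimizer of $u - v$ on $\overline U$), and $\overline E \subset U$ (since $u - v \geq s > t$ on $\partial U$). Along $\partial E$ we have $u = v + t$, while on $E$ we have $u < v + t$.

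The strategy is a Bedford--Taylor style strict comparison. Write $V := v + t$ and form
\[
\tilde u(x) := \begin{cases} \max\{u(x), V(x)\} & x \in \overline U, \\ u(x) & x \in X \setminus U. \end{cases}
\]
The separation $t < s$ guarantees $u > V$ in a neighborhood of $\partial U$, so $\tilde u \equiv u$ there; hence $\tilde u$ is continuous and globally $\omega$-psh. One checks $\{u < \tilde u\} = E$, with $\omega_{\tilde u}^n = \omega_V^n = \omega_v^n \geq c\, \omega_u^n$ on $E$ (by hypothesis) and $\omega_{\tilde u}^n = \omega_u^n$ elsewhere. Applying the Hermitian comparison principle from \cite{KN} to $u$ and $\tilde u$ yields $\int_E \omega_{\tilde u}^n \leq \int_E \omega_u^n$, and combined with $\omega_{\tilde u}^n \geq c\, \omega_u^n$ and $c > 1$ this forces $\int_E \omega_u^n = 0$, so $\omega_u^n \equiv 0$ and $\omega_V^n \equiv 0$ on $E$. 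Now $u$ and $V$ are continuous, $\omega$-psh, $\omega$-maximal on $E$, and agree on $\partial E$; the maximum principle for $\omega$-maximal functions then gives $u \equiv V$ on $\overline E$, contradicting $u < V$ on $E$.

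The main obstacle is justifying the comparison principle in the Hermitian setting---the torsion corrections to the classical Bedford--Taylor argument are precisely why the proposition notes that ``a different proof is required.'' Should invoking \cite{KN} as a black box prove insufficient, I would replace that step by a direct argument: combining Lemma~\ref{const-comp}, the mixed-form inequality Lemma~\ref{mixed-type-ineq}, and the Gauduchon--Stokes manipulation in the proof of Lemma~\ref{uniqueness-smooth}, one should be able to upgrade the local strict inequality $\omega_{\tilde u}^n \geq c\, \omega_u^n$ on $E$ (together with equality off $\overline E$) into the global strict comparison needed to contradict Lemma~\ref{const-comp}.
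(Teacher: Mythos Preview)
Your argument has a genuine gap at the comparison-principle step. The result you cite from \cite{KN} is not the classical inequality $\int_{\{u<\tilde u\}}\omega_{\tilde u}^n \leq \int_{\{u<\tilde u\}}\omega_u^n$; on a non-K\"ahler manifold that inequality is simply false in general because the torsion of $\omega$ produces boundary terms in the Stokes computation. What \cite[Theorem~0.2]{KN} actually gives is a \emph{modified} comparison principle: for sets of the form $\{u<(1-\varepsilon)v + S_\varepsilon + t\}$ one has
\[
\int \omega_{(1-\varepsilon)v}^n \;\leq\; \Bigl(1+\tfrac{Ct}{\varepsilon^n}\Bigr)\int \omega_u^n,
\]
with the $(1-\varepsilon)$-scaling and the multiplicative error $1+Ct/\varepsilon^n$ both essential. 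Your set $E=\{u<v+t\}$ has no such scaling, so the theorem does not apply to it; and even if it did, the error factor would prevent you from deducing $\int_E\omega_u^n=0$. Your fallback step is also problematic: once you have two $\omega$-maximal functions agreeing on $\partial E$, concluding they coincide on $E$ is exactly a minimum principle of the type you are trying to prove, so the argument becomes circular. The global tools you list (Lemma~\ref{const-comp}, the Gauduchon--Stokes trick of Lemma~\ref{uniqueness-smooth}) compare Monge--Amp\`ere masses over all of $X$, not over a subdomain $E$, and do not localize in the way you need.

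The paper's proof is designed precisely to accommodate the error term. One compares $u$ with $(1-\varepsilon)v$ on the set $U(\varepsilon,t)=\{u<(1-\varepsilon)v+S_\varepsilon+t\}\cap U$, so that \cite[Theorem~0.2]{KN} applies directly. The hypothesis $c>1$ then enters: choose $\varepsilon$ small enough that $(1-\varepsilon)^nc>1$. Combining $\omega_{(1-\varepsilon)v}^n\geq(1-\varepsilon)^nc\,\omega_u^n$ with the modified comparison gives
\[
\bigl[(1-\varepsilon)^nc-1\bigr]\int_{U(\varepsilon,t)}\omega_u^n \;\leq\; \frac{Ct}{\varepsilon^n}\int_{U(\varepsilon,t)}\omega_u^n,
\]
hence $(1-\varepsilon)^nc-1\leq Ct/\varepsilon^n$ for every sufficiently small $t>0$ with $\varepsilon$ fixed. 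Letting $t\to 0$ gives the contradiction directly, with no appeal to maximality or a further comparison. The strictness $c>1$ is exactly what absorbs the Hermitian error term; your route, by aiming for $\int_E\omega_u^n=0$, throws away that leverage.
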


\begin{proof}
Without loss of generality suppose that $\min\{u(x) - v(x): x\in \partial U\} = 0$, i.e.
\[
	v \leq u \quad \mbox{on } \partial U.
\]
We need to show that $\min\{u(x)-v(x): x \in \overline{U}\} =0$. Suppose that it is not the case. Then there exists $x_0\in U$ such that $u(x_0) < v(x_0)$. So,
\[
	S := \min\{u(x) - v(x): x \in U\} <0. 
\]
Let  us use the following  notation 
\[
	S_\varepsilon := \min_{\overline U}\{u - (1-\varepsilon) v\},
\]
for $0< \varepsilon <<1$.
Then
\[
	S - \varepsilon \|v\|_\infty \leq S_\varepsilon \leq S +  \varepsilon \|v\|_\infty .
\]
Therefore, 
\[
	U(\varepsilon,t):= \{u< (1-\varepsilon) v + S_\varepsilon + t \} \cap U \subset 
	\{u < v + S + 2 \varepsilon \|v\|_\infty+ t\} \cap U \subset\subset U ,
\]
for every $0< t, \varepsilon<\delta_0$ with $\delta_0$ small enough,  as $u\geq v$ on $\partial U$. 

From
the modified comparison principle \cite[Theorem 0.2]{KN} we have
\begin{equation}
\label{min-prin-eq8}
	0< \int_{U(\varepsilon,t)} \omega_{(1-\varepsilon)v}^n \leq [1+ \frac{C t}{\varepsilon^n}] \int_{U(\varepsilon,t)} \omega_u^n ,
\end{equation}
for every $0< t < \frac{\varepsilon^3}{16B}$, where $0< \varepsilon<\delta_0$.
Since 
\begin{equation}
\label{min-prin-eq9}
	\omega_{(1-\varepsilon)v}^n\geq (1-\varepsilon)^n\omega_v^n \geq (1-\varepsilon)^n c\, \omega_u^n,
\end{equation}
and $c>1$, we can choose $0<\varepsilon<\delta_0$ so that 
\begin{equation}
\label{min-prin-eq10}
	(1-\varepsilon)^n c >1.
\end{equation}
It follows from \eqref{min-prin-eq8}, \eqref{min-prin-eq9} and \eqref{min-prin-eq10} that 
\[
	[(1-\varepsilon)^nc -1] \int_{U(\varepsilon,t)} \omega_u^n
	\leq \frac{Ct}{\varepsilon^n} \int_{U(\varepsilon,t)} \omega_u^n.
\]
Therefore, for fixed $0< \varepsilon< \delta_0$ and for every $0< t< \frac{\varepsilon^3}{16B}$
\[	
	[(1-\varepsilon)^nc -1] < \frac{Ct}{\varepsilon^n}.
\]
It is impossible. The proof is completed.
\end{proof}

We finish this section with a lemma   from Riemannian geometry, which follows from Sobolev's embedding theorem.

\begin{lemma}
\label{bbg}
Let $(X,\omega)$ be a compact $n$-dimensional Hermitian manifold. 
Let $\psi \in C(X) \cap W^{1,2}(X)$ be a real-valued function. Fix $d, \delta >0$. Assume that
\[
	Vol_\omega(\{\psi < 0\}) \geq \delta, \quad
	Vol_\omega(\{\psi \geq d\}) \geq \delta .
\]
The square of the norm of the gradient is given in local coordinates by
\[
	|\partial \psi|^2 = \sum g^{k\bar l} \partial_k \psi \partial_{\bar l} \psi, \ \ \text{where} \ \ 
	\quad \omega = \frac{i}{2} \sum g_{k\bar l} dz_k \wedge d\bar z_l .
\]
Then,
\[
	\int_{\{0 < \psi < d\}} |\partial \psi| \omega^n 
	\geq C \, d \, \delta^{\frac{4n-1}{2n}} ,
\]
where $C >0$ is a uniform constant depending only on $X, \omega$. 
\end{lemma}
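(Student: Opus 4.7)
The plan is to reduce the inequality to the classical Sobolev--Poincar\'e inequality on $(X,\omega)$, viewed as a compact Riemannian $2n$-manifold, applied to a suitable truncation of $\psi$. Set
\[
\phi := \min\{\max\{\psi,0\},d\}, \qquad A := \{\psi < 0\}, \qquad B := \{\psi \geq d\}.
\]
Then $\phi \in C(X) \cap W^{1,1}(X)$, $0 \leq \phi \leq d$, with $\phi \equiv 0$ on $A$ and $\phi \equiv d$ on $B$. Moreover $|\partial\phi| = |\partial\psi|$ on $E := \{0 < \psi < d\}$ and $|\partial\phi| = 0$ elsewhere, so $\int_X |\partial\phi|\,\omega^n = \int_E |\partial\psi|\,\omega^n$.

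Next I would bound the oscillation of $\phi$ from below. Let $\bar\phi := \int_X \phi\,\omega^n \in [0,d]$. A case split on whether $\bar\phi \leq d/2$ or $\bar\phi > d/2$ shows that $|\phi - \bar\phi| \geq d/2$ on one of $A$, $B$, hence on a set of $\omega$-volume at least $\delta$. Consequently, for the exponent $q = 2n/(2n-1)$,
\[
\|\phi - \bar\phi\|_{L^q}^q \;\geq\; (d/2)^q \,\delta, \qquad\text{so}\qquad \|\phi - \bar\phi\|_{L^q} \;\geq\; \tfrac{d}{2}\,\delta^{(2n-1)/(2n)}.
\]

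The key tool is the Sobolev--Poincar\'e inequality on the compact $2n$-real-dimensional Riemannian manifold underlying $(X,\omega)$: for every $f \in W^{1,1}(X)$,
\[
\|f - \bar f\|_{L^{2n/(2n-1)}} \;\leq\; C\,\|\nabla f\|_{L^1}.
\]
Applying this to $f = \phi$, and observing that $|\partial\psi|$ and the Riemannian gradient norm $|\nabla\psi|$ agree up to a fixed constant depending only on $\omega$, yields
\[
\int_{\{0 < \psi < d\}} |\partial\psi|\,\omega^n \;\geq\; C\,d\,\delta^{(2n-1)/(2n)}.
\]
Since $\delta \leq Vol_\omega(X) = 1$ and $(2n-1)/(2n) \leq (4n-1)/(2n)$, one has $\delta^{(2n-1)/(2n)} \geq \delta^{(4n-1)/(2n)}$, giving the claimed inequality (in fact with a sharper exponent on $\delta$).

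There is no serious obstacle here: the argument is a routine combination of truncation, an elementary Chebyshev-type estimate, and the Sobolev--Poincar\'e inequality on the underlying Riemannian structure (the complex structure and non-K\"ahlerness of $\omega$ play no role in this step). The exponent $(4n-1)/(2n)$ appearing in the statement is strictly weaker than the $(2n-1)/(2n)$ delivered by the approach above; one can also derive the stated exponent more directly, for instance by applying the $W^{1,2}$-Sobolev embedding combined with an additional H\"older interpolation, but the conclusion is identical.
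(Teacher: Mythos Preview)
Your argument is correct and follows essentially the same route as the paper: both define the same truncation $\phi=\min\{\max\{\psi,0\},d\}=\max(\psi,0)-\max(\psi-d,0)$, subtract its mean, and apply the $W^{1,1}\hookrightarrow L^{2n/(2n-1)}$ Sobolev--Poincar\'e inequality on the underlying compact $2n$-real-dimensional manifold, so that the $L^1$-norm of the gradient reduces to $\int_{\{0<\psi<d\}}|\partial\psi|\,\omega^n$.

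The one genuine difference is in the lower bound for $\|\phi-\bar\phi\|_{L^{q}}$, $q=2n/(2n-1)$. The paper first bounds the mean $M=\bar\phi\geq d\,\delta$ using $Vol_\omega(\{\psi\geq d\})\geq\delta$, and then integrates $|\phi-\bar\phi|^q=M^q$ over $\{\psi<0\}$ (volume $\geq\delta$), which yields $\|\phi-\bar\phi\|_q\geq d\,\delta^{(4n-1)/(2n)}$. Your case split on whether $\bar\phi\leq d/2$ or $\bar\phi>d/2$ guarantees $|\phi-\bar\phi|\geq d/2$ on a set of volume $\geq\delta$ directly, giving the sharper bound $\|\phi-\bar\phi\|_q\geq(d/2)\,\delta^{(2n-1)/(2n)}$. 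Since $Vol_\omega(X)=1$ forces $\delta\leq 1$, your bound indeed implies the stated one; your observation that the exponent $(2n-1)/(2n)$ already suffices is correct and is a small improvement over the paper's argument.
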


\begin{proof} We apply Sobolev's embedding theorem to the function
$$
\Psi = \max (\psi , 0) - \max (\psi -d , 0)  - M,
$$
where $M= \int _X [\max (\psi , 0) - \max (\psi -d , 0)] \om ^n$. It gives
\[
	\|\Psi\|_{\frac{2n}{2n-1}} \leq C \int_X |\partial \Psi | \omega^n.
\]
It is easy to see that 
\[
	\int_X |\partial \Psi | \omega^n 
	= \int_{\{0 < \psi < d\}} |\partial \psi | \omega^n.
\]
As $Vol_\omega(\{\psi \geq d\}) \geq \delta$, we have
\[
	M = \int_{\{0< \psi < d\}} \psi \omega^n + d \int_{\{\psi \geq d\}} \omega^n
	\geq d \, \delta.
\]
Moreover, as $Vol_\omega(\{\psi <0\})\geq \delta$,
\begin{align*}
&	\int_X |\max(\psi,0) - \max(\psi -d, 0) - M|^\frac{2n}{2n-1} \omega^n\\	
&\geq 		\int_{\{\psi \leq 0\}} M^\frac{2n}{2n-1} \omega^n
\geq 		\int_{\{\psi \leq 0\}} (d \, \delta)^\frac{2n}{2n-1} \omega^n \\
&\geq	d^\frac{2n}{2n-1} \delta^{1 + \frac{2n}{2n-1}}.
\end{align*}
Therefore, we conclude that
$
	\int_{\{0 < \psi < d\}} |\partial \psi| \omega^n 
	\geq C \, d \, \delta^{1+ \frac{2n-1}{2n}}.
$
\end{proof}

\section{stability estimates and uniqueness} 
\label{S2}

In this section we  prove our main result: the stability and uniqueness of the Monge-Amp\`ere equation with the positive right hand side in $L^p(\omega^n)$, $p>1$.

\begin{theorem}
\label{stability-smooth-lp}
Let $0\leq f, g\in L^p(\omega^n)$, $p>1$, be such that $\int_X f \omega^n >0$, $\int_X g\omega^n>0$. Consider two continuous $\omega$-psh solutions of the complex Monge-Amp\`ere equation 
\[
	\omega_u^n = f\omega^n, \quad 
	\omega_v^n = g\omega^n
\] 
with 
$
	\sup_X u = \sup_X v =0.
$
Assume that $f$ is  smooth and 
\[
	 f \geq c_0>0. 
\] 
Fix $0< \alpha < \frac{1}{n+1}$. Then, there exists $C = C(c_0, \alpha, \|f\|_p, \|g\|_p)$ such that
\[
	\|u-v\|_\infty \leq C \|f-g\|_p^\alpha.
\]
\end{theorem}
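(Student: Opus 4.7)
The plan is an energy-type argument for $\psi := u - v$, exploiting the smoothness of $f$ (hence of $u$) and the strict ellipticity from $f \geq c_0$. I rewrite the difference of Monge-Amp\`ere equations as the linear equation
$$dd^c \psi \wedge T_{u,v} = (f-g) \omega^n, \qquad T_{u,v} := \sum_{k=0}^{n-1} \omega_u^k \wedge \omega_v^{n-1-k}.$$
The mixed form inequality (Lemma \ref{mixed-type-ineq}), applied summand-wise, gives $T_{u,v} \geq c\, c_0^{(n-1)/n} \omega^{n-1}$; thus the linear operator $\phi \mapsto dd^c\phi \wedge T_{u,v}$ is uniformly elliptic in the sense of $\omega^{n-1}$, which is the structural input that makes an $L^\infty$--$L^p$ estimate for $\psi$ plausible.

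The argument then unfolds in three steps. First, I would multiply the equation by $\psi$ (or by a truncation $(\psi - s)_+$ with $s > 0$ a parameter) and integrate by parts on $X$. The boundary terms vanish by compactness, and the Hermitian torsion contributions --- arising because $\omega$ is not closed, controlled by the curvature constant $B$ in \eqref{curv-const} --- are absorbed using the $L^\infty$ bound for the potentials from \cite{KN}. This produces an energy inequality of the form
$$\int_X |\partial\psi|^2 \, \omega^n \ \leq \ C_1 \|\psi\|_\infty \|f-g\|_p \ +\ \text{torsion error}.$$
Second, I would apply the Sobolev-type Lemma \ref{bbg} to a suitable shift of $\psi$ at a threshold $d$ comparable to $\|\psi\|_\infty$, obtaining $\int |\partial\psi|\,\omega^n \geq C d\,\delta^{(4n-1)/(2n)}$, where $\delta$ is a lower bound on the volumes of the level sets involved. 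A power-of-$\|\psi\|_\infty$ lower bound on $\delta$ is extracted from the barrier construction in Proposition \ref{barrier-funct} (which forces a positive Monge-Amp\`ere mass $V_{\min}$ on small sublevel sets) combined with the $L^\infty$ potential estimates. Third, Cauchy--Schwarz passes from $L^1$ to $L^2$ on the gradient, and combining with the energy inequality produces
$$d^2\,\delta^{(4n-1)/n} \ \leq \ C\|\psi\|_\infty \|f-g\|_p + \text{error},$$
which, after calibrating $d$ and $\delta$ as appropriate powers of $\|\psi\|_\infty$, yields $\|\psi\|_\infty^{n+1} \leq C\|f-g\|_p$, whence the H\"older bound for any $\alpha < 1/(n+1)$.

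The \textbf{main obstacle} is controlling the torsion error introduced by integration by parts on a non-K\"ahler manifold: a naive bound contributes a term of order $\|\psi\|_\infty^2$ on the right-hand side of the energy inequality, which would dominate the useful term $\|\psi\|_\infty\|f-g\|_p$ unless $\|\psi\|_\infty \lesssim \|f-g\|_p$ --- i.e., unless we have already proved what we want. Circumventing this apparent circularity --- presumably through truncated test functions $(\psi - s)_+$ with a De Giorgi / Moser-type iteration in $s$, or by deploying the minimum principle (Proposition \ref{min-prin}) locally on level sets of $\psi$ where the convex combination $w_\delta = \delta u + (1-\delta)v$ has strictly larger Monge-Amp\`ere measure than $\omega_u^n$ (a consequence of $f \geq c_0$ together with Lemma \ref{mixed-type-ineq}) --- is where the ``completely new method'' announced in the introduction should play its decisive role, and is what ultimately forces the exponent to degrade from the linear scale to the $1/(n+1)$ scale.
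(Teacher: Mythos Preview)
Your proposal identifies several correct ingredients (the linearization $dd^c\psi\wedge T_{u,v}=(f-g)\omega^n$, the Sobolev lemma~\ref{bbg}, the barrier of Proposition~\ref{barrier-funct}, the minimum principle), but two points prevent it from closing.

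First, the ellipticity claim $T_{u,v}\geq c\,c_0^{(n-1)/n}\omega^{n-1}$ is not justified by Lemma~\ref{mixed-type-ineq}: that lemma concerns top-degree measures, not $(n-1,n-1)$-forms. In eigenvalue terms, $\omega_u^n\geq c_0\omega^n$ forces $\prod\lambda_i\geq c_0$ but allows $\prod_{i\neq j}\lambda_i$ to be arbitrarily small, so $\omega_u^{n-1}$ need not dominate $\omega^{n-1}$; and for $\omega_v$ there is no lower bound at all on $g$. The paper circumvents this by never using a pointwise lower bound on $T_{u,v}$. Instead it bounds the \emph{product}
\[
\Big(\int_{\{a<\varphi<b\}}\omega_u\wedge\omega^{n-1}\Big)\cdot\Big(\int_{\{a<\varphi<b\}}d\varphi\wedge d^c\varphi\wedge\omega_u^{n-1}\Big)
\]
from below by $\big(\int|\partial\varphi|\,\omega^n\big)^2$ via Cauchy--Schwarz and the pointwise eigenvalue inequality $\frac{\omega_u\wedge\omega^{n-1}}{\omega^n}\cdot\frac{\omega_u^{n-1}\wedge d\varphi\wedge d^c\varphi}{\omega^n}\geq\frac{c_0}{n}\,|\partial\varphi|^2$ (Lemma~\ref{led-lower-bound}); this is where smoothness of $u$ is genuinely used.

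Second, and more decisively, the torsion obstacle you correctly flag is resolved not by iteration but by \emph{localization to dyadic collars}. The paper estimates the energy only on sets $\{a<\varphi<b\}$ with the special scaling $b-a=a-t_0$; then every torsion term coming from integration by parts is of order $(b-a)^2$, exactly matching the useful upper bound~\eqref{lec-eq14}. Combined with the product lower bound above and Lemma~\ref{bbg}, this yields $\int_{\{a<\varphi<b\}}\omega_u\wedge\omega^{n-1}\geq\delta_e$ with $\delta_e$ \emph{independent of} $b-a$. Summing over $N$ disjoint dyadic collars gives $(N-1)\delta_e\leq\int_X\omega_u\wedge\omega^{n-1}\leq C$, which bounds $N$ and hence $\|\varphi\|_\infty\leq 2^N(t_1-t_0)$. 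The small-volume regime $\int_{\Omega(t_1)}f\omega^n\leq V_{min}$ is handled separately by the barrier and minimum principle (Lemma~\ref{case-1}), giving $t_1-t_0\leq C\varepsilon^\alpha$. It is this two-regime split together with the collar summation --- not a global energy inequality --- that constitutes the ``completely new method''.
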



\begin{observation} We shall remove the smoothness assumption on $f$  thus obtaining Theorem A from Introduction (see Remark~\ref{relax-smooth}).
\end{observation}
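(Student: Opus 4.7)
The plan is to combine the barrier construction (Proposition \ref{barrier-funct}), the minimum principle (Proposition \ref{min-prin}), and the mixed-form-type inequality (Lemma \ref{mixed-type-ineq}) --- replacing the classical Bedford--Taylor comparison principle, which fails in the Hermitian setting --- with the Sobolev-type estimate of Lemma \ref{bbg}. Throughout write $\sigma = \|f-g\|_p$ and $s_0 = \|u-v\|_\infty$; the goal is $s_0 \leq C \sigma^\alpha$. Both $u$ and $v$ are only assumed continuous (no smoothness of $v$ is used, and uniqueness of continuous solutions is what will follow \emph{from} the stability once proven); both are automatically in $W^{1,2}(X)$ as bounded $\omega$-psh functions, so Lemma \ref{bbg} is applicable to $v - u$.

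\emph{Setup and reduction.} By the asymmetry of the hypotheses ($f$ smooth and strictly positive, $g$ only $L^p$ and nonnegative) the two one-sided bounds on $\sup_X(u-v)$ and $\sup_X(v-u)$ must be treated separately, but by the same template. I describe the harder direction, where $s_0 = v(x_0) - u(x_0) > 0$ at some $x_0 \in X$. Introduce free parameters $\epsilon, s, \beta > 0$ to be optimized at the end, and split the mass of $|f-g|$ into a good part where $|f-g| < \sigma^\beta$ and a bad set $E = \{|f-g| \geq \sigma^\beta\}$ of volume $\leq \sigma^{(1-\beta)p}$ by Chebyshev.

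\emph{Barrier boost and minimum principle.} Take a nonnegative $L^p$ density $\tilde h$ of small total mass, tailored to the discrepancy between $f$ and $g$, and solve the auxiliary equation $\omega_\rho^n = c\,\tilde h\,\omega^n$ with $\sup_X \rho = 0$ (solvable by \cite{KN}). Provided $\int_X \tilde h\,\omega^n \leq 2V_{\min}$, Proposition \ref{barrier-funct} forces $c \geq 2^n$, so $\rho$ carries Monge--Ampère mass boosted by a large multiplicative factor precisely where needed, while the $L^\infty$ estimate of \cite{DK12} bounds $\|\rho\|_\infty$ in terms of $\|\tilde h\|_p$. Form $\tilde u = (1-\epsilon)u + \epsilon\rho$; Lemma \ref{mixed-type-ineq} yields
\[
  \omega_{\tilde u}^n \;\geq\; \bigl[(1-\epsilon) f^{1/n} + \epsilon(c\tilde h)^{1/n}\bigr]^n \omega^n,
\]
which, for the correct calibration of $\tilde h$ and $\epsilon$, strictly dominates $\omega_v^n = g\omega^n$ on an open neighbourhood of $\{\tilde u < v - s\}$. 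The minimum principle (Proposition \ref{min-prin}) then pins $\min(\tilde u - v)$ to the boundary of this set; unwinding the convex-combination error $\epsilon\|\rho\|_\infty$ converts this into a quantitative inequality relating $s_0$, $\sigma$, $\epsilon$, $\beta$, and $s$.

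\emph{Closing the loop, optimization, and main obstacle.} Lemma \ref{bbg} applied to $\psi = v - u$ (with $d \sim s_0$ and $\delta$ comparable to the volume of the good set) lower-bounds a strip-gradient integral by $s_0 \, \delta^{(4n-1)/(2n)}$; pairing this with the Monge--Ampère identities for $u$ and $v$ tested against forms built from $\psi$ controls the gradient integral from above in terms of $\sigma$ plus the bad-set contribution. Matching all exponents --- the Chebyshev loss $\sigma^{(1-\beta)p}$, the boosted constant $c \geq 2^n$, the loss $\epsilon\|\rho\|_\infty$ at the convex-combination step, and the Sobolev exponent $(4n-1)/(2n)$ from Lemma \ref{bbg} --- produces the target range $\alpha < 1/(n+1)$. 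The main obstacle is this simultaneous optimization: $\epsilon$ must be small so that $\tilde u$ tracks $u$, the total mass of $\tilde h$ must stay below $2V_{\min}$ so that Proposition \ref{barrier-funct} fires with $c \geq 2^n$, and the set $\{\tilde u < v - s\}$ must remain nontrivial and compactly contained where the strict Monge--Ampère inequality is available. Smoothness of $f$ enters in this last localization step, where pointwise positivity and continuity of $f$ allow $g$ to be compared with $c f$, $c > 1$, on suitable pieces of the good set; the exponent $1/(n+1)$ is the unavoidable balance of these competing constraints.
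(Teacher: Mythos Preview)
Your proposal misreads what the observation asks for and misses the actual mechanism by which smoothness is removed. The paper does \emph{not} re-run the stability argument for non-smooth $f$; instead it proceeds in two short steps. First, Theorem~\ref{stability-smooth-lp} (the smooth-$f$ case, already proven) is used to establish uniqueness of continuous solutions when $f\geq c_0>0$ (Corollary~\ref{weak-unique}): approximate $f$ by smooth $f_j\geq c_0/2$ in $L^p$, solve $\omega_{u_j}^n=c_jf_j\omega^n$ by Tosatti--Weinkove, show $c_j\to 1$ (via Lemma~\ref{const-comp} and a subsequence argument), and then Theorem~\ref{stability-smooth-lp} gives $\|u_j-u\|_\infty\leq C\|c_jf_j-f\|_p^\alpha\to 0$. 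Second (Remark~\ref{relax-smooth}), the same approximation lets one pass to the limit in the stability inequality itself, yielding Theorem~A for general $f\geq c_0>0$ in $L^p$. That is the whole content of the observation.

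Your direct approach has a genuine gap at the step you did not single out. In the paper's collar argument, the crucial link between the Sobolev bound (Lemma~\ref{bbg}) and the Laplacian mass is Lemma~\ref{led-lower-bound}, which rests on the \emph{pointwise} inequality
\[
\frac{\omega_u\wedge\omega^{n-1}}{\omega^n}\cdot\frac{\omega_u^{n-1}\wedge d\psi\wedge d^c\psi}{\omega^n}
\;\geq\;\frac{1}{n}\,\frac{\omega_u^n}{\omega^n}\cdot\frac{d\psi\wedge d^c\psi\wedge\omega^{n-1}}{\omega^n},
\]
verified by diagonalizing $\omega_u$ in normal coordinates. This requires $\omega_u$ to be a smooth form, hence $u$ (and $f$) smooth. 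The paper explicitly flags this as ``the sole place where we need to use the assumption on smoothness of $f$ and $u$'' and notes that one cannot approximate at this stage \emph{because uniqueness has not yet been established}. Your claim that ``smoothness of $f$ enters in this last localization step, where pointwise positivity and continuity of $f$ allow $g$ to be compared with $cf$'' misidentifies the obstruction: the comparison of $g$ with $(1+\varepsilon^\alpha)^{-1}f$ on $\Omega_1$ uses only positivity of $f$, not smoothness, and indeed Lemma~\ref{case-1} nowhere uses $f$ smooth. Without a replacement for Lemma~\ref{led-lower-bound} your ``closing the loop'' paragraph does not close.
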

\begin{observation}
\label{ob-us} It follows from Lemma~\ref{uniqueness-smooth} that  the continuous $\omega$-psh function $u$ is automatically smooth as it coincides with the unique (normalised) smooth solution obtained by Tosatti and Weinkove \cite{TW10b}. 
\end{observation}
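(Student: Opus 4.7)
The plan is to reduce smoothness of $u$ to a uniqueness statement once a smooth solution is known to exist. The Tosatti--Weinkove theorem \cite{TW10b} supplies a smooth pair $(\tilde u, c)$ with $c>0$ solving
\[
	\omega_{\tilde u}^n = c\, f\, \omega^n, \qquad \sup_X \tilde u = 0,
\]
and my goal is to identify the given continuous $u$ with this smooth $\tilde u$.

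The first step is to pin down the constant. From the two equations, $\omega_u^n = f\omega^n = (1/c)\,\omega_{\tilde u}^n$, so Lemma~\ref{const-comp} applied to the pair $(u,\tilde u)$ gives $1/c\geq 1$; symmetrically, $\omega_{\tilde u}^n = c\,\omega_u^n$ and Lemma~\ref{const-comp} applied to $(\tilde u, u)$ gives $c\geq 1$. Hence $c=1$ and $\omega_{\tilde u}^n = \omega_u^n = f\omega^n$.

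The main trick is then to promote $\omega_{\tilde u}$ to a new reference metric. Because $\tilde u$ is smooth and $f\geq c_0>0$, the real smooth $(1,1)$-form $\hat\omega := \omega_{\tilde u}$ satisfies $\hat\omega^n = f\omega^n > 0$ pointwise, so $\hat\omega$ is positive definite and hence a genuine Hermitian metric on $X$. Setting $\rho := u - \tilde u$, continuity of $u$ gives $\rho\in C(X)$; positivity of $\omega_u = \hat\omega + dd^c \rho$ means $\rho\in PSH(\hat\omega)$; and the two Monge--Amp\`ere equations merge into
\[
(\hat\omega + dd^c \rho)^n = \omega_u^n = \hat\omega^n .
\]
Applying Lemma~\ref{uniqueness-smooth} with the Hermitian metric $\hat\omega$ in place of $\omega$ now forces $\rho$ to be constant, and since $\sup_X u = \sup_X \tilde u = 0$ that constant must be $0$. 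Thus $u = \tilde u$ is smooth.

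The only point that requires checking is that Lemma~\ref{uniqueness-smooth} really does apply with the reference metric switched from $\omega$ to $\hat\omega$. Its proof relies solely on the existence of a smooth Gauduchon function for the reference Hermitian metric, which is a general fact for any smooth Hermitian metric on a compact manifold (Gauduchon's theorem), so the substitution is harmless. No serious analytic obstacle arises; the entire argument is essentially a two-line reduction plus the constant-matching via Lemma~\ref{const-comp}.
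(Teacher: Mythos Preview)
Your argument is correct and is precisely the route the paper has in mind: the remark just before Lemma~\ref{uniqueness-smooth} already says that smoothness of any continuous solution with smooth right-hand side follows from that lemma, and you have simply spelled out the reduction (pin down the constant via Lemma~\ref{const-comp}, change the reference metric to $\hat\omega=\omega_{\tilde u}$, and apply Lemma~\ref{uniqueness-smooth}). The only point one might add is that the mixed form type inequality used inside the proof of Lemma~\ref{uniqueness-smooth} is also metric-independent, so the substitution $\omega\leadsto\hat\omega$ is indeed harmless throughout.
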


\begin{proof}

We shall use the notation: 
\[
	 \varphi = u-v , \quad  t_0 = \min_X \varphi , \quad  \Omega (t)=\{ \varphi <t \}, \quad
	   \| f-g \|_p  \leq \varepsilon.
\]
Assuming $0<\varepsilon <<1$  we wish to show that
$\| \varphi \|_{\infty } \leq C\varepsilon^\alpha$ with a fixed $0< \alpha<\frac{1}{n+1}$. 
Since $Vol_\omega(X) =1$, it follows that $\|f-g\|_r \leq \|f-g\|_p \leq \varepsilon$ and
\[
	\int_E |f-g| \omega^n \leq \left(\int_E |f-g|^r \omega^n\right)^\frac{1}{r} 
	\leq \left(\int_E |f-g|^p \omega^n\right)^\frac{1}{p} \leq \varepsilon , 
\]
for every $1\leq r\leq p$ and every Borel set $E\subset X$.





\begin{lemma}
\label{case-1} 
Let $V_{min}>0$ be the constant from Proposition~\ref{barrier-funct} with $A_0 = 2\|f\|_p$. Fix $t_1 > t_0$.
If $\int_{\Omega(t_1)} f \omega^n \leq V_{min}$, then 
\begin{equation}
\label{barrier-case}
	t_1 - t_0 \leq C \varepsilon^\alpha ,
\end{equation}
where $0< \alpha < \frac{1}{n+1}$ is fixed.
\end{lemma}

\begin{proof}
Consider the following sets 
\[
	\Omega_1 = \{z\in \Omega(t_1): f(z) \leq (1+\varepsilon^\alpha) g(z)\}, \quad
	\Omega_2 = \Omega(t_1) \setminus \Omega_1.
\]
We have
\begin{equation}
\begin{aligned}
	\int_{\Omega_2} f\omega^n 
&\leq 	\int_{\Omega_2} |f-g| \omega^n + \int_{\Omega_2} g \omega^n\\
&\leq		\varepsilon + \int_{\Omega_2} \varepsilon^{-\alpha} (f-g)  \omega^n \\
&\leq		\varepsilon + \varepsilon^{1-\alpha} \leq 2\varepsilon^{1-\alpha}. 
\end{aligned}
\end{equation}
Moreover, 
\begin{equation} 
\label{?}
\begin{aligned}
	\int_{\Omega_2} f^p\omega^n 
&\leq 	\int_{\Omega_2} 2^{p-1} (|f - g|^p + g^p) \omega^n \\
&\leq		\int_{\Omega_2} 2^{p-1} |f-g|^p \omega^n 
			+ \int_{\Omega_2} 2^{p-1}[\varepsilon^{-\alpha} (f-g)]^p \omega^n \\
&\leq		2^{p-1}(\|f-g\|_p^p + \|f-g\|_p^p \varepsilon^{-p\alpha}) \\
&\leq		2^{p-1}(\varepsilon^p+ \varepsilon^{p- p\alpha}) 
\leq 		2^p \varepsilon^{p-p\alpha}.
\end{aligned}
\end{equation}
Define for $0< \varepsilon <<1$ and $A>>1$ (to be chosen later)
\[
	\hat f(z) =	\begin{cases}
		f(z)  \quad &\mbox{for } z\in \Omega_1, \\
		\varepsilon^{-n\alpha} f(z) \quad &\mbox{for } z\in \Omega_2, \\
		\frac{1}{A} f(z) \quad	&\mbox{for } z\in X \setminus \Omega(t_1).
	\end{cases}
\]
Then
\begin{equation}
\label{f-hat-l1}
\begin{aligned}
	\int_X \hat f \omega^n
&=		\int_{\Omega_1} f \omega^n 
		+ \int_{\Omega_2} \varepsilon^{-n\alpha} f \omega^n
		+ \int_{X\setminus \Omega(t_1)} \frac{1}{A} f \omega^n \\
&\leq		V_{min} + 2\varepsilon^{1- \alpha - n\alpha}+ \frac{1}{A}\|f\|_1 ,
\end{aligned}
\end{equation}
and
\begin{equation}
\label{f-hat-lp}
\begin{aligned}
		\int_X \hat f^p \omega^n
&=		\int_{\Omega_1} f^p \omega^n
		+ \int_{\Omega_2} \varepsilon^{-np\alpha} f^p \omega^n
		+ \int_{X\setminus \Omega(t_1)} \frac{1}{A^p} f^p \omega^n \\
&\leq		\|f\|_p^p + 2^p \varepsilon^{p-p\alpha - n p \alpha}
			+ \frac{1}{A^p}\|f\|_p^p. \\
\end{aligned}
\end{equation}
Since $0< \alpha < \frac{1}{n+1}$,  
 we can choose $0< \varepsilon <<1$ and $A>>1$ such that
\begin{equation}
\label{epsilon-cond}
	\max\{ 2^p \varepsilon^{1 - (n+1)\alpha},  \frac{1}{A} \|f\|_p \} 
\leq 		\min\{\frac{V_{min}}{4}, \frac{\|f\|_1}{2}\}.
\end{equation}
Notice that by our normalisation $Vol_\omega(X) =1$, so we have
\[
	\|f\|_r \leq \|f\|_p \quad \mbox{for } 1\leq r \leq p.
\]
It implies that for such $\varepsilon$ and $A$,
\begin{equation}
\label{f-ha-l1-lp}
	\int_X \hat f \omega^n \leq \frac{3}{2} V_{min} \quad \mbox{and} \quad
	\|\hat f\|_p \leq 2 \|f\|_p .
\end{equation}
By \cite[Theorem 0.1]{KN} there exists $w \in PSH(\omega) \cap C(X)$ and $\hat c>0$ solving
\begin{equation}
\label{c1-bf-w}
	\omega_w^n = \hat c \hat f \omega^n, \quad \sup_X w =0.
\end{equation}
It follows from Proposition~\ref{barrier-funct} and \eqref{f-ha-l1-lp} that 
\begin{equation}
\label{c1-bf-w-c}
	\hat c \geq 2^n.
\end{equation}
Moreover, as $\hat f \geq f/A$, using Lemma~\ref{const-comp},  we get that
\[
	\hat c \leq A.
\] 
Let us define for $0<s<1$,
$
	\psi_s = (1-s)v + s w .
$
By the mixed form  type inequality (Lemma~\ref{mixed-type-ineq})  we have
\begin{align*}
	\omega_{\psi_s}^n \geq [(1-s) g^\frac{1}{n} + s (\hat c\hat f)^\frac{1}{n} ]^n \omega^n
&	= 	\left[(1-s)\left(g/f\right)^\frac{1}{n} + s \left(\hat c\hat f/f\right)^\frac{1}{n}\right]^n  f \omega^n \\
&	=: [b(s)]^n f \omega^n. 
\end{align*}
We shall see that $b(s)>1$ for $2\varepsilon^\alpha \leq s \leq 1/2 $ on $\Omega(t_1)$. Indeed,  for  $z\in \Omega_1$, we have $g/f \geq 1/(1+\varepsilon^\alpha)$ and $\hat c\hat f \geq 2^n f$.
Therefore, on $\Omega_1$,
\begin{equation}
\label{bc-eq15}
	b(s) \geq \frac{1-s}{(1+\varepsilon^\alpha)^\frac{1}{n}} + 2s 
	\geq \frac{1-s}{1+\varepsilon^\alpha} + 2s.
\end{equation}
Now, for $z\in \Omega_2$, we have $\hat c \hat f \geq 2^n \varepsilon^{-n\alpha}f$. Therefore, on $\Omega_2$
\begin{equation}
\label{bc-eq16}
	b(s) \geq 2s \varepsilon^{-\alpha}.
\end{equation}
It follows from \eqref{bc-eq15} and \eqref{bc-eq16} that if 
\[
	2 \varepsilon^\alpha \leq s \leq \frac{1}{2},
\]
then 
\[
	b(s) > 1+ \varepsilon^\alpha
\]
on $\Omega(t_1) = \Omega_1 \cup \Omega_2$. This means that for such a value of $s$, 
\[ 
	\omega_{\psi_s}^n > (1+\varepsilon^\alpha) f \omega^n \quad
	 \mbox{on } \Omega(t_1).
\]

Applying the minimum principle (Proposition~\ref{min-prin}) for $s= 2 \varepsilon^\alpha$ and the function 
\[
	u - [(1-s)v + s w] = \varphi - s(w-v)
\]
on $\Omega(t_1)$, we get that
\[
	\min_{\overline{\Omega(t_1)}} [ \varphi(z) - s (w-v)(z) ] = 
	\min_{\partial\Omega(t_1)}  [\varphi(z) - s (w-v)(z)].
\]
Therefore
\[
	t_1 - t_0 \leq 2 s \|w-v\|_\infty 
\leq 	4 (\|v\|_\infty + \|w\|_\infty) \varepsilon^\alpha . 
\]
Since $\hat c \leq A$, by \cite[Corollary 5.6]{KN}, we have 
\[	
	\|w\|_\infty \leq  A^\frac{1}{n} H,
\]
where $H = H(\|f\|_p, \|g\|_p, X, \omega)>0$ is a uniform bound for $u,v$, i.e.
\[
	- H \leq u, v \leq 0.
\]
Thus  Lemma~\ref{case-1} follows.
\end{proof}

\bigskip

We pass to the second part of the proof. 
The main ingredient here are   {\em a priori} estimates for the Laplacian $\omega_u \wedge \omega^{n-1}$ in collars $\{a< \varphi < b\}$. 
Recall our notation:
\[
	0< c_0 \leq f \in C^\infty(X),
\]
and 
\[	
	\omega_u^n = f\omega^n, \quad \omega_v^n = g\omega^n,
\]
where 
\[
	 u\in C^\infty(X), \quad \omega + dd^c u>0, \quad v\in PSH(\omega) \cap C(X).
\]
Moreover,
\[
	t_0 = \min_X \varphi, \quad \|f-g\|_p \leq \varepsilon.
\]
\begin{lemma}[Laplacian mass on small collars]
\label{lap-est-collar}
Let $t_0 < a<b$ be two real numbers satisfying 
\[
	\varepsilon \leq b-a \quad \mbox{and}\quad  a-t_0 = b- a .
\] 
Assume that 
\begin{equation}
\label{bbg-iso-ass}
	Vol_\omega (\{\varphi < a\}) \geq \delta \quad
	\mbox{and} \quad
	Vol_\omega(\{\varphi \geq b\}) \geq \delta,
\end{equation}
for some $\delta>0$.
Then,
\[
	\int_{\{a < \varphi < b\}} \omega_u \wedge \omega^{n-1} \geq \delta_e>0,
\]
where $\delta_e = C c_0 \,\delta^\frac{4n-1}{n}> 0$  (thus depends only on  $\delta, c_0, X, \omega$).
\end{lemma}

\begin{proof} 
We first estimate
\begin{equation}
\label{lec-eq1}
	\int_{\{a < \varphi < b\}}d\varphi \wedge d^c \varphi \wedge \omega_u^{n-1} 
\end{equation}
from above and then 
\begin{equation}
\label{lec-eq2}
	\int_{\{a < \varphi < b\}} \omega_u \wedge \omega^{n-1} \cdot
		\int_{\{a < \varphi < b\}}d\varphi \wedge d^c \varphi \wedge \omega_u^{n-1} 
\end{equation}
from below.
Let us start with the first one.
\begin{equation}
\label{lec-eq3}
	\int_{\{a < \varphi < b\}}d\varphi \wedge d^c \varphi \wedge \omega_u^{n-1} 
	\leq \int_{\{a < \varphi < b\}}d\varphi \wedge d^c \varphi \wedge T ,
\end{equation}
where 
\[
	T = \sum_{k=0}^{n-1} \omega_u^k \wedge \omega_v^{n-1-k} \quad
	\mbox{and}\quad
	\omega_u^n - \omega_v^n = dd^c \varphi \wedge T.
\]
Then
\[
	 dd^c T  = dd^c\omega \wedge T_1 + d\omega \wedge d^c \omega\wedge T_2
\]
where $T_1, T_2$ are positive currents (see \cite{DK12}).
Therefore, by the choice  of the constant $B>0$  (see \eqref{curv-const})  
\[
	- B (\omega^2 \wedge T_1 + \omega^3\wedge T_2) \leq dd^c T
\leq  	B (\omega^2 \wedge T_1 + \omega^3\wedge T_2) .
\]
It follows that for any Borel set $E\subset X$ and a continuous function $w \geq 0$ on $X$ we have (see e.g. \cite[Proposition 1.5]{cuong15})
\begin{equation}
\label{lec-cln}
\begin{aligned}
	\left| \int_E w dd^c T \right| 
&\leq 	B \|w\|_{L^\infty(E)} \int_X (\omega^2 \wedge T_1 + \omega^3\wedge T_2) \\
&\leq 	C B \|w\|_{L^\infty(E)} (1+\|u\|_\infty)^n (1+\|v\|_\infty)^n.
\end{aligned}
\end{equation}
To simplify notation, we write in what follows
\[
	\psi:= \varphi -a, \quad \mbox{(and then} \quad \min_X \psi= t_0 - a ).
\]
Notice that this does not affect  $T$ and we still have $dd^c \psi \wedge T = \omega_u^n -\omega_v^n$. The right hand side in the inequality \eqref{lec-eq3} becomes
\begin{equation}
\label{lec-eq2-change}
	\int_{\{0< \psi < b-a\}}d\psi \wedge d^c \psi \wedge T .
\end{equation}
By the Stokes theorem,
\begin{align*}
&	\int_{\{0< \psi < b-a\}}d\psi \wedge d^c \psi \wedge T \\
&	=	\int_{\{0 < \psi < b-a\}}  d(\psi d^c \psi \wedge T)  
		- \int_{\{0 < \psi < b-a\}} \psi dd^c \psi \wedge T 
		 +  \int_{\{0 < \psi < b-a\}} \psi d^c \psi \wedge d T \\
&	= 	\int_{\partial \{0 < \psi < b-a\}} \psi d^c \psi \wedge T 
		- \int_{\{0 < \psi < b-a\}} \psi (f-g)\omega^n 
		- \int_{\{0 < \psi < b-a\}} \psi d \psi \wedge d^c T \\
&	=	(b-a) \int_{\partial \{0 < \psi < b-a\}} d^c \psi \wedge T 
		- \int_{\{0 < \psi < b-a\}} \psi (f-g)\omega^n 
		- \int_{\{0 < \psi < b-a\}}\psi d \psi \wedge d^c T \\
&	=	(b-a) \left(\int_{\{\psi<b-a\}} dd^c \psi \wedge T 
		- \int_{\{\psi<b-a\}}d^c\psi \wedge dT\right)  \\
&	\qquad\qquad	- \int_{\{0 < \psi < b-a\}} \psi d \psi \wedge d^cT
			- \int_{\{0 < \psi < b-a\}} \psi (f-g)\omega^n\\
&	:= (b-a)(J_1 + J_2) - J_3 - J_4,
\end{align*}
where we used $dd^c \psi \wedge T = (f-g)\omega^n$ for the second inequality and the Stokes theorem once more for the fourth equality. A few words of explanation how this theorem is used when the boundary of the set $\{0<\psi< b-a\}$ is not smooth. One needs to use the approximation argument. First, we assume that $v$ is also smooth, then by Sard's theorem and the Lebesgue domination theorem we process the proof as above. Next, choosing a smooth sequence $\omega $-psh functions $\{v_j\}$ decreasing (uniformly) to $v$ we  get the equality of the first integral and the last sum corresponding to $v_j$. Finally, pass to the limit by using Bedford-Taylor's convergence theorem in the Hermitian setting \cite{DK12} to get equality in the above estimates. We refer the reader to \cite[Theorem~4.1]{BT82} or \cite[Theorem~1.16]{kolodziej05} for more details of this argument in the case $v$ is only bounded, where the quasi-continuity of plurisubharmonic function is essential. Below we shall use the Stokes theorem several times in this fashion.

Our next step is to bound $|J_k|$, $k =1,2,3,4$. However, it is not hard to see from the computation below that one can deal with $J_k$ and $ -J_k,$  in the same way. So we only give the estimates from above for $J_k$.

The  bounds for $J_1, J_4$  are easy:
\begin{equation}
\label{j-1}
	J_1 = \int_{\{\psi<b-a\}} (\omega_u^n - \omega_v^n) 
		\leq   \int_X |f-g| \omega^n 
		\leq \varepsilon ,
\end{equation}
and
\begin{equation}
\label{j-4}
	J_4  	= \int_{\{0 < \psi < b-a\}} \psi (f-g)\omega^n \leq 
	\int_{\{0 < \psi < b-a\}} \psi |f-g|\omega^n  \leq
	(b-a) \varepsilon.
\end{equation}
To estimate $J_2, J_3$ we will use the inequality \eqref{lec-cln} several times without  mentioning it anymore. First, we consider  $J_2$. Set
\[
	J_2' = - J_2 - \int_{\{\psi \leq 0\}} d\psi \wedge d^c T .
\]
Then, using the Stokes theorem twice, we get that
\begin{align*}
	J_2' 
&	=	 \int_{\{0< \psi <b-a\}} d \psi \wedge d^cT \\
&	= 	\int_{\{0< \psi <b-a\}} d (\psi d^c T) -
		\int_{\{0< \psi <b-a\}} \psi dd^c T \\
&	=	(b-a) \int_{\partial \{0 < \psi < b-a\}}  d^c T  - 
		\int_{\{0<\psi <b-a\}} \psi dd^cT \\
&	=	(b-a) \int_{\{\psi< b-a\}} dd^c T - 
		\int_{\{0<\psi <b-a\}} \psi dd^cT \\
&	\leq	C (b-a).
\end{align*}
Similarly, by the Stokes theorem,
\[
	\int_{\{\psi \leq 0\}} d\psi \wedge d^c T  
	= - \int_{\{\psi \leq 0\}} \psi dd^c T 
	\leq C (a- t_0) = C(b-a) ,
\]
where we used the fact that $\min_X \psi = t_0 - a = -(b-a)$.
This implies that
\begin{equation}
\label{j-2}
	|J_2| \leq C(b-a) .
\end{equation}
Again, using the Stokes theorem twice, we get that
\begin{align*}
	2 J_3
&	= 	\int_{\{0 < \psi < b-a\}} d \psi^2 \wedge d^c T  \\
&	= 	\int_{\{0 < \psi < b-a\}} d (\psi^2 d^c T) - 
		\int_{\{0 < \psi < b-a\}} \psi^2 dd^c T  \\
&	= 	(b-a)^2 \int_{\partial \{0 < \psi < b-a\}} d^c T -
	 \int_{\{0 < \psi < b-a\}} \psi^2 dd^c T \\
&	=	(b-a)^2 \int_{\{\psi < b-a\}} dd^c T  - 
	 \int_{\{0 < \psi < b-a\}} \psi^2 dd^c T .	
\end{align*}
It  follows that
\begin{equation}
\label{j-3}
	|J_3| \leq C (b-a)^2.
\end{equation}
Combining the inequalities \eqref{lec-eq3}, \eqref{j-1}, \eqref{j-4},  \eqref{j-2} and  \eqref{j-3} one obtains
\begin{equation}
\label{lec-eq14}
\begin{aligned}
	\int_{\{0< \psi < b-a\}}d\psi \wedge d^c \psi \wedge \omega_u^{n-1} 
&	\leq 	C \left[2 \varepsilon(b-a) + (b-a)^2 + (b-a)^2  \right] \\
&	\leq 4C (b-a)^2 .
\end{aligned}
\end{equation}
This is the desired  upper bound. 
It remains to estimate from below the quantity
\[
	I_u(a,b):= \int_{\{0< \psi < b-a\}} \omega_u \wedge \omega^{n-1} \cdot
		\int_{\{0 < \psi < b-a\}}d\psi \wedge d^c \psi \wedge \omega_u^{n-1}. 
\]
Since $u, v\in PSH(\omega) \cap C(X) \subset W^{1,2}(X)$, 
we have $\psi \in W^{1,2}(X)$. Applying Lemma~\ref{bbg} for $\psi$, $d=b-a$ and $\delta$ we get that
\begin{equation}
\label{lec-eq17}
	\int_{\{0< \psi < b-a\}} |\partial \psi|\omega^n 
\geq 	C (b-a) \delta^\frac{4n-1}{2n}.
\end{equation}

\begin{lemma}
\label{led-lower-bound} 
We have 
\[
I_u(a,b) \geq \frac{c_0}{n^2}\left(\int_{\{0 < \psi < b-a\}} |\partial \psi| \omega^n\right)^2
\]
where 
\[
	|\partial \psi|^2 = \sum g^{k\bar l} \partial_k \psi \partial_{\bar l} \psi,
	\quad \omega = \frac{i}{2} \sum g_{k\bar l} dz_k \wedge d\bar z_l
\]
in a local coordinate chart.
\end{lemma}

\begin{proof} Recall that $f>0$ is smooth, hence \cite{TW10b} implies that $u$ is smooth.
As $\psi \in C(X)\cap W^{1,2}(X)$, there exist $\psi_j\in C^\infty(X)$ such that $\psi_j \to \psi$ uniformly and  $\psi_j \to \psi$ in $W^{1,2}(X)$. Therefore, 
upon applying the convergence theorem \cite{BT82}, \cite{DK12},
we may assume $\psi$ is smooth. Then, the inequality is a consequence of the Cauchy-Schwarz inequality and the elementary point-wise inequality
\begin{align*}
	\frac{\omega_u \wedge \omega^{n-1}}{\omega^n} \cdot \frac{\omega_u^{n-1} \wedge d\psi \wedge d^c \psi}{\omega^n}  
&	\geq		\frac{1}{n} \frac{\omega_u^n}{\omega^n} \cdot \frac{d \psi \wedge d^c \psi \wedge \omega^{n-1}}{\omega^n} \\
&	\geq  \frac{c_0}{n^2} |\partial \psi|^2.
\end{align*}
The first inequality can be checked by writing it down in normal  coordinates at any given point. 
This is the sole place where we need to use the assumption on smoothness of $f$ and $u$. The approximation process would not work for the proof of the lemma because up to this point  the uniqueness of continuous solutions is not yet asserted. 
\end{proof}

Thanks to \eqref{lec-eq17} and Lemma ~\ref{led-lower-bound} we get that
\begin{equation}
\label{lec-eq31}
I_u(a,b)
\geq	C c_0 \, (b-a)^2 \delta^\frac{4n-1}{n}.
\end{equation}
This is the lower bound we needed. 
Combining the upper bound \eqref{lec-eq14} and the lower bound \eqref{lec-eq31} we obtain that
\begin{align*}
	\int_{\{0< \psi < b-a\}} \omega_u \wedge \omega^{n-1}
&	\geq		\frac{C c_0  (b-a)^2 \delta^\frac{4n-1}{n}}{4 C (b-a)^2} \\
&	\geq 		C c_0 \, \delta^\frac{4n-1}{n} 
:= 	\delta_e>0.
\end{align*}
Going back to the original notation  $\psi= \varphi+a$ we get the desired inequality
\[
	\int_{\{a< \varphi < b\}} \omega_u \wedge \omega^{n-1}
	\geq \delta_e.
\]
The lemma is proven.
\end{proof}

Let us observe that from   Lemma~\ref{lap-est-collar} and its proof, by the symmetry 
  with respect to $u$ and $v$, one obtains also the following statement.

\begin{observation}
\label{Laplacian-mass-2} Let $\hat t_0 = \max_X\varphi$. Let $a < b< \hat t_0$ be two real numbers such that
\[
	\varepsilon \leq b - a, \quad \mbox{and} \quad 
	\hat t_0 - b = b - a.
\]
Assume that 
\[
	Vol_\omega(\{\varphi < a\}) \geq \delta, \quad
	Vol_\omega(\{\varphi \geq b \}) \geq \delta
\]
for some $\delta>0$. Then, 
\[
	\int_{\{a < \varphi < b\}} \omega_u \wedge \omega^{n-1} \geq \delta_e>0
\]
where $\delta_e$ as in Theorem~\ref{lap-est-collar} depends only on $\delta, c_0, X, \omega$.
\end{observation}

Thanks to {\em a priori} estimates of the Laplacian mass on small collars we get the following estimate for a larger collar (there is a similar statement corresponding to Remark~\ref{Laplacian-mass-2}). 

\begin{proposition}
\label{lap-est} 
Assume that $t_1-t_0\geq \varepsilon$ and $Vol_\omega(\Omega(t_1))\geq \delta$. Define for $k\geq 2$ integer,
\[
	t_k = 2^{k-1} (t_1 - t_0) + t_0.
\]
If $Vol_\omega(X\setminus \Omega(t_N)) \geq \delta$, $N\geq 1$, then we have
\[
	\int_{\{t_0 < \varphi \leq t_N\}} \omega_u \wedge \omega^{n-1} 
	\geq (N-1) \delta_e ,
\]
where $\delta_e =  C c_0 \, \delta^\frac{4n-1}{n}$.
\end{proposition}

\begin{proof}
Observe that $t_k - t_{k-1} = t_{k-1}-t_0 \geq \varepsilon$ for $k\geq 2$. The assumptions of Lemma~\ref{lap-est-collar} are satisfied for $a= t_{k-1}, b= t_k$, $k =2, ..., N$. Then, we get that
\[
	\int_{\{t_0 < \varphi \leq t_N\}} \omega_u \wedge \omega^{n-1} 
\geq		\sum_{k=2}^N \int_{\{t_{k-1}< \varphi \leq t_k\}} 
			\omega_u \wedge \omega^{n-1} 
\geq 		(N-1) \delta_e
\]
where $\delta_e = Cc_0 \, \delta^\frac{4n-1}{n}$.
\end{proof}

We have completed estimates on the level sets near the minimum and the maximum of the difference $\varphi= u-v$. Moreover, Lemma~\ref{lap-est-collar} gives bounds for the Laplacian mass of remaining level sets on the manifold. Now by combining them we will finish the proof of the stability theorem.

\medskip

{\em End of proof of Theorem~\ref{stability-smooth-lp}.} 
We have that
\[
	t_0 = \min_X \varphi, \quad \|f-g\|_p \leq \varepsilon.
\]
Set
\[
	- \hat t_0 = \min_X (-\varphi) = - \max_X \varphi
\]
Since $\sup_Xu = \sup_X v =0$, we have 
\begin{equation}
\label{end-stab-eq16}
	\hat t_0 = \max_X \varphi \geq 0, \quad 
	t_0 = \min_X\varphi \leq 0.
\end{equation}
So
\begin{equation}
\label{pr-s-s-lp-eq4}
	\|\varphi\|_\infty \leq  \hat t_0 - t_0.
\end{equation}
Our goal is to prove $\|\varphi\|_\infty \lesssim \varepsilon^\alpha$.
We consider two possibilities: 

\noindent
{\bf Case 1.}
\begin{equation}
\label{f-g-small-volume-1}
\min \left\{	\int_{\{\varphi < t_0 + \varepsilon\}} f\omega^n, 
	\quad \int_{\{\varphi > \hat t_0 - \varepsilon\}} f \omega^n \right \} 
\geq 	\frac{V_{min}}{4}.
\end{equation}
\noindent
{\bf Case 2.}
\begin{equation}
\label{f-g-small-volume-2}
\min \left\{	\int_{\{\varphi < t_0 + \varepsilon\}} f\omega^n, 
	\quad \int_{\{\varphi > \hat t_0 - \varepsilon\}} f \omega^n \right \} 
< 		\frac{V_{min}}{4}.
\end{equation}

In  the first case  it follows from \eqref{f-g-small-volume-1} and H\"older's inequality that 
\begin{equation}
\label{pr-s-s-lp-eq6}
	\min\{ Vol_\omega(\{\varphi < t_0 + \varepsilon\}), 
			Vol_\omega(\{\varphi > \hat t_0 - \varepsilon\})\} 
	\geq \delta_1  ,
\end{equation}
where 
\[
	\delta_1 = \frac{(V_{min}/4)^{p^*}}{\|f\|_p^{p^*}}>0.
\]
Define $t_N = 2^{N-1} \varepsilon + t_0$ for an integer $N\geq 1$. By \eqref{pr-s-s-lp-eq4}, if  $N$ satisfies
$	\varepsilon(2^{N-1} + 1)< \|\varphi\|_\infty ,
$ 
then
\[
	t_N < \hat t_0 - \varepsilon.
\]
Hence, applying  Proposition~\ref{lap-est},  for $t_1= t_0+\varepsilon$ and $t_N$ as above,  we get that
\begin{equation}
\label{pr-s-s-lp-eq8}
	\int_{\{t_0 < \varphi \leq t_N\}} \omega_u \wedge \omega^{n-1}
\geq		(N-1) \delta_{1e} ,
\end{equation}
where $\delta_{1e} = C c_0 \, \delta_1^\frac{4n-1}{n}$. 
Let $G\geq 0$  be the Gauduchon function of $\omega$, i.e. 
$dd^c (e^G\omega^{n-1}) =0$. By the Stokes theorem
\begin{equation}
\label{pr-s-s-lp-eq9}
	\int_X \omega_u \wedge \omega^{n-1} \leq \int_X e^G \omega_u \wedge \omega^{n-1} = \int_X e^G \omega^n.
\end{equation}
Choose \begin{equation}\label{eps-cond}
    N_1 = \frac{\int_X e^G \omega^n}{\delta_{1e}} +2. 
\end{equation}
Then, 
by \eqref{pr-s-s-lp-eq8} and \eqref{pr-s-s-lp-eq9} we have
\[
	\int_X e^G \omega^n > ([N_1] -1)  \delta_{1e} > \int_X e^G \omega^n ,
\]
where $[N_1]$ is the integer part of $N_1$. This is not possible, so for such a choice of $N_1$

\begin{equation}
\label{extra-case-1}
	\|\varphi\|_\infty \leq \varepsilon (1+ 2^{N_1}).
\end{equation}
Thus, in the first case  the statement follows.

We turn now to the second case and assume \eqref{f-g-small-volume-2}. By the symmetry of the estimate for the Laplacian mass in large collars (Remark~\ref{Laplacian-mass-2}), without loss of generality, we may assume that 
\begin{equation}
\label{f-small-mass}
	\int_{\{\varphi < t_0 + \varepsilon\}} f \omega^n \leq V_{min}/4.
\end{equation}
Choose $t_1$ to be the supremum over $t$ for which
\[
	\int_{\Omega(t)} f \omega^n < V_{min}.
\]
By \eqref{f-small-mass} and Lemma~\ref{case-1} we have
\begin{equation}
\label{end-stab-eq3}
	\varepsilon \leq t_1 - t_0 \leq C \varepsilon^\alpha.
\end{equation}
Choose $\hat t_1$ to be infimum over $t$ for which
\[
	\int_{X\setminus \Omega(t)} g \omega^n \leq V_{min} .
\]
By Lemma~\ref{case-1} applied for $(-\varphi$) we have
\begin{equation}
\label{end-stab-eq6}
	(-\hat t_1) - (-\hat t_0) = \hat t_0 - \hat t_1 \leq C \varepsilon^\alpha.
\end{equation}
Since now $\int_{\Omega(t_1)}f \omega^n \geq V_{min}$, it follows that
\[
	Vol_\omega( \Omega(t_1)) \geq \frac{V_{min}^{p^*}}{\|f\|_p^{p^*}}
	\geq \frac{V_{min}^{p^*}}{\max\{\|f\|_p^{p^*}, \|g\|_p^{p^*}\}}:=\delta_2>0.
\]
As in  Proposition~\ref{lap-est} we define
\begin{equation}
\label{end-stab-eq11}
	\delta_{2e} =C c_0 \, \delta_2^\frac{4n-1}{n} \quad \mbox{and} \quad
	t_N = 2^{N-1}(t_1- t_0) + t_0.
\end{equation}
Take $N$ to be the smallest integer which is larger than 
\begin{equation}
\label{end-stab-eq10}
	N_2: = \frac{\int_X e^G \omega^n}{\delta_{2e}} +2.
\end{equation}
If we had $Vol_\omega(X\setminus \Omega(t_N)) \geq \delta_2$,  then Proposition~\ref{lap-est} would lead to a contradiction
\[
	\int_{\{t_0< \varphi \leq t_N\}} \omega_u \wedge \omega^{n-1}
\geq 		(N-1) \delta_{2e} > \int_X e^G \omega^n 
\geq 		\int_X \omega_u \wedge \omega^{n-1}.
\]
Therefore, 
\[
	Vol_\omega(\{\varphi \geq t_N\})
= 	Vol_\omega(X\setminus \Omega(t_N))
\leq 	\delta_2.
\]
By the definition of $\delta_2$ it follows that
\[
	\int_{X\setminus \Omega(t_N)} g \omega^n
\leq 		\|g\|_p \left[ Vol_\omega(X\setminus \Omega(t_N))\right]^\frac{1}{p*} 
\leq 		V_{min}.
\]
Thus, 
\begin{equation}
\label{end-stab-eq15}
	t_N \geq \hat t_1.
\end{equation}
We are ready to finish the proof in the second case. By \eqref{end-stab-eq16} we have
\begin{equation}
\label{end-stab-eq17}
	|\varphi| \leq \hat t_0 - t_0.
\end{equation}
Furthermore, from \eqref{end-stab-eq11} and \eqref{end-stab-eq15} it follows that 
\begin{equation}
\label{end-stab-eq18}
	\hat t_0 - t_0 = (\hat t_0 -  t_N) + (t_N - t_0) 
	 \leq (\hat t_0 - \hat t_1) + 2^{N-1}(t_1 - t_0).
\end{equation}
Combine \eqref{end-stab-eq3}, \eqref{end-stab-eq6} and \eqref{end-stab-eq18} to get
\[
	\hat t_0 - t_0 \leq C \varepsilon^\alpha + 2^{N-1}C\varepsilon^\alpha
	= C(1+ 2^{N-1}) \varepsilon^\alpha.		
\]
It follows from this, \eqref{end-stab-eq10} and \eqref{end-stab-eq17} that
\begin{equation}
\label{stab-case-2}
	\|\varphi\|_\infty \leq C(1+ 2^{N_2}) \varepsilon^\alpha.
\end{equation}
The proof in the second case is completed. Finally, the desired stability estimate follows from 
 \eqref{extra-case-1} and \eqref{stab-case-2}. 
\end{proof}

The above stability result  gives the  uniqueness of continuous solutions.
 
\begin{corollary}
\label{weak-unique}
Suppose that $0< c_0 \leq f \in L^p(\omega^n)$, $p>1$. Then there is a unique $u\in PSH(\omega) \cap C(X)$, $\sup_X u =0$, and unique $c>0$ such that
\[
	\omega_u^n= c f\omega^n.
\]
\end{corollary}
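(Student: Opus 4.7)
Existence of a pair $(u,c)$ with $u\in PSH(\omega)\cap C(X)$, $\sup_X u=0$ and $\omega_u^n=cf\omega^n$ is provided by \cite{KN} under the hypothesis $0\leq f\in L^p(\omega^n)$ with $\int_X f\omega^n>0$, which is implied by $f\geq c_0>0$. So only uniqueness needs to be argued, and the plan is to deduce it in two short steps from the results already assembled in this section.

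First I would pin down the constant. Suppose $(u_1,c_1)$ and $(u_2,c_2)$ are two such pairs, i.e.\ $\omega_{u_i}^n=c_i f\omega^n$ with $\sup_X u_i=0$ and $u_i\in PSH(\omega)\cap C(X)$. Rewriting
\[
\omega_{u_1}^n=\frac{c_1}{c_2}\,\omega_{u_2}^n,
\]
Lemma~\ref{const-comp} (applied with $u=u_1$, $v=u_2$) gives $c_1/c_2\geq 1$. By symmetry $c_2/c_1\geq 1$, hence $c_1=c_2=:c$.

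Second, I would read off equality of the potentials from stability. Both $u_1$ and $u_2$ satisfy
\[
\omega_{u_i}^n=(cf)\,\omega^n,\qquad \sup_X u_i=0,
\]
with the right-hand side $cf\geq cc_0>0$ and $cf\in L^p(\omega^n)$. Thus the hypotheses of Theorem~A (i.e.\ Theorem~\ref{stability-smooth-lp} together with Remark~\ref{relax-smooth}, which removes the smoothness assumption on $f$) are met for the pair $u_1,u_2$ with $f$ and $g$ both equal to $cf$. The stability estimate gives, for some fixed $\alpha\in(0,\tfrac{1}{n+1})$ and a constant $C=C(cc_0,\alpha,\|cf\|_p)$,
\[
\|u_1-u_2\|_\infty\;\leq\;C\,\|cf-cf\|_p^\alpha\;=\;0,
\]
so $u_1\equiv u_2$.

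There is no real obstacle here: the argument is entirely a combination of Lemma~\ref{const-comp} (to fix $c$) and Theorem~A (to fix $u$). The only point to be careful about is to invoke the $L^p$ (non-smooth) form of the stability theorem promised in Remark~\ref{relax-smooth}, rather than the smooth version stated in Theorem~\ref{stability-smooth-lp}, since $f$ here is merely $L^p$.
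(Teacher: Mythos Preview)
Your argument is circular. You invoke Theorem~A via Remark~\ref{relax-smooth} to conclude $u_1=u_2$, but Remark~\ref{relax-smooth} itself reads: ``\emph{Having Corollary~\ref{weak-unique}}, the statement of Theorem~\ref{stability-smooth-lp} holds without the smoothness assumption on $f$.'' In other words, the non-smooth stability (Theorem~A) is derived \emph{from} Corollary~\ref{weak-unique}, not the other way around. At this point in the paper only Theorem~\ref{stability-smooth-lp}, which assumes $f$ smooth, is available; you cannot apply it directly to two solutions with right-hand side $cf$ when $f$ is merely in $L^p$.

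The paper breaks the circularity by an approximation argument: pick smooth $f_j\to f$ in $L^p$ with $f_j\geq c_0/2$, solve $(\omega+dd^c u_j)^n=c_j f_j\omega^n$ smoothly via Tosatti--Weinkove (so $u_j$ is \emph{unique}), show $c_j\to 1$, and then apply the \emph{smooth} stability Theorem~\ref{stability-smooth-lp} with $u_j$ in the role of the smooth solution and any continuous solution $u$ in the role of $v$. This yields $\|u_j-u\|_\infty\leq C\|c_jf_j-f\|_p^\alpha\to 0$, and since $u_j$ is independent of the choice of $u$, any two continuous solutions must coincide. Your first step (uniqueness of $c$ via Lemma~\ref{const-comp}) is fine and matches the paper; it is the second step that needs to be replaced by this approximation.
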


\begin{proof} By Lemma~\ref{const-comp} we have that $c$ is uniquely defined. Hence we may assume $c=1$. Take a smooth sequence $f_j$ such that $f_j \geq \frac{c_0}{2}$ and $f_j$  converges to $f$ in $L^p(\omega^n)$,  as $j\to +\infty$. By the theorem of Tosatti and Weinkove \cite{TW10b} there exists a unique $u_j \in C^\infty(X)$, $ \sup_X u_j =0,$ and a unique constant $c_j>0$ such that
\[
	(\omega + dd^c u_j)^n = c_j f_j \omega^n, \quad \omega + dd^c u_j >0.
\]
We first observe that $c_j \to 1$ as $j \to +\infty$. Indeed, it follows from \cite{KN} that
\[
	\frac{1}{C} \leq c_j \leq C ,
\]
where $C = C(\|f\|_p, X, \omega)>0$. Suppose that there existed a subsequence $c_k \to c \neq 1$ as $k \to + \infty$. Consider
\begin{equation}
\label{cor-u-eq3}
	(\omega + dd^c u_k)^n = c_k f_k \omega^n.
\end{equation}
Since the family $\{u_k \in PSH(\omega): \sup_X u_k =0\}$ is relatively compact in $L^1(\omega^n)$, after passing to a subsequence, still writing $u_k$, we may assume that $\{u_k\}$ is a Cauchy sequence in $L^1(\omega^n)$.  \cite[Corollary 5.10]{KN} implies that $\{u_k\}_k$ is a Cauchy sequence in $C(X)$. Taking the limit on two sides of \eqref{cor-u-eq3} we get, by the Bedford-Taylor convergence theorem,  that
\[
	(\omega+ dd^c w)^n = c f \omega^n = c \,\omega_u^n ,
\]
where $w$ is the limit of $\{u_k\}$. This contradicts the uniqueness of the constant (Lemma~\ref{const-comp}).
Thus we can write 
\[
	(\omega + dd^c u_j)^n = F_j \omega^n, 
\]
where $F_j \geq \frac{c_0}{3}$ and $F_j$ converges to  $f$ in $L^p(\omega^n)$ as $j \to +\infty$. By Theorem~\ref{stability-smooth-lp} we have 
\[
	\|u_j - u\|_\infty \leq C \|F_j - f\|_p^\alpha
\]
for a fixed $0< \alpha< \frac{1}{n+1}$. Since $u_j$ are  unique, we infer that $u$ is also unique.
\end{proof}

\begin{observation}\label{relax-smooth}
Having Corollary~\ref{weak-unique}, the statement of Theorem~\ref{stability-smooth-lp} holds without the smoothness assumption on $f$. 
Thus we obtain Theorem A.
It follows from the  approximation argument as in the proof of Corollary~\ref{weak-unique}.
\end{observation}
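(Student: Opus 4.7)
The plan is to reduce the general $L^p$ statement to the smooth-$f$ case already handled in Theorem~\ref{stability-smooth-lp}, reusing the approximation machinery from Corollary~\ref{weak-unique}. Concretely, I choose a sequence of smooth functions $f_j$ with $f_j \geq c_0/2$ and $f_j \to f$ in $L^p(\omega^n)$. By Tosatti–Weinkove there exist smooth $u_j \in PSH(\omega)$ and constants $c_j > 0$ such that
\[
    \omega_{u_j}^n = c_j f_j\, \omega^n, \quad \sup_X u_j = 0.
\]
Setting $F_j := c_j f_j$, the argument inside Corollary~\ref{weak-unique} shows that $c_j \to 1$, so for $j$ large $F_j$ is smooth, bounded below by $c_0/3$, converges to $f$ in $L^p(\omega^n)$, and $\|F_j\|_p \to \|f\|_p$. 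Moreover, the same argument produces uniform convergence $u_j \to u$ on $X$, where $u$ is the (unique, by Corollary~\ref{weak-unique}) continuous normalised solution of $\omega_u^n = f\,\omega^n$.

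Next, for each large $j$, I apply Theorem~\ref{stability-smooth-lp} to the pair $(u_j, v)$: the right-hand side $F_j$ is smooth and bounded below by $c_0/3$, while $g \in L^p(\omega^n)$ is the right-hand side for $v$. For any fixed $\alpha < 1/(n+1)$ the theorem yields
\[
    \|u_j - v\|_\infty \leq C \, \|F_j - g\|_p^{\alpha},
\]
with $C$ depending only on $c_0$, $\alpha$, $\|F_j\|_p$ and $\|g\|_p$. Since $\|F_j\|_p$ stays bounded (in fact converges to $\|f\|_p$), the constant $C$ can be taken uniform in $j$, depending only on $c_0, \alpha, \|f\|_p, \|g\|_p$.

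Finally I pass to the limit $j \to \infty$. The left-hand side tends to $\|u - v\|_\infty$ by the uniform convergence $u_j \to u$ established above, while $\|F_j - g\|_p \to \|f - g\|_p$ because $F_j \to f$ in $L^p(\omega^n)$. This produces the desired estimate
\[
    \|u - v\|_\infty \leq C\, \|f - g\|_p^{\alpha},
\]
which is Theorem~A. There is no genuine obstacle here, as the two inputs—the smooth-$f$ stability theorem and the uniform convergence $u_j \to u$—have already been established. The only minor point to verify is that the lower bound $F_j \geq c_0/3$ holds for all large $j$, which is immediate from $c_j \to 1$ and $f_j \geq c_0/2$.
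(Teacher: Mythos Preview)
Your argument is correct and is precisely the approximation argument the paper has in mind: approximate $f$ by smooth $f_j\geq c_0/2$, use the machinery of Corollary~\ref{weak-unique} to get $c_j\to 1$ and $u_j\to u$ uniformly, then apply Theorem~\ref{stability-smooth-lp} to the pair $(u_j,v)$ and pass to the limit. The only detail worth making explicit is the uniformity of the constant $C$ in $j$, which you have addressed by noting that $\|F_j\|_p$ stays bounded and the lower bound $F_j\geq c_0/3$ holds for large $j$.
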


 \begin{observation}
The referee provided the following argument which helps to get the stability estimates for continuous solutions Theorem~\ref{stability-smooth-lp}  once they are proven in the smooth category. Let $f, g$ and $u,v$ as in Theorem~A. Let $f_j, g_j$ be smooth densities approximating $f,g$ in $L^p$ with $f_j\geq c_0/2$. Let $u_j,v_j$ be decreasing sequences of smooth $\omega$-psh functions which converge to $u,v$ respectively. Let $\varphi_j, \psi_j$ be smooth $\omega$-psh functions solving 
$$
	(\omega+ dd^c\varphi_j)^n = e^{\varphi_j-u_j}f_j \omega^n, \quad
	(\omega+ dd^c \psi_j)^n = e^{\psi_j-v_j} g_j \omega^n.
$$
By the proof of \cite[Theorem~2.1]{cuong15} we derive that $\varphi_j$ and $\psi_j$ converge uniformly to $u_0, v_0$ respectively, where $u_0, v_0 \in PSH(\omega) \cap C^0(X)$ are unique solutions to 
$$
	\omega_{u_0}^n = e^{u_0} (e^{ - u} f) \omega^n \quad
	\text{and}\quad \omega_{v_0}^n = e^{v_0}(e^{ - v} g) \omega^n,
$$
respectively. By uniqueness of $u_0, v_0$  for the corresponding equations \cite[Lemma~2.3]{cuong15}, we get that $u= u_0$ and $v=v_0$. Therefore,
$$
\|u-v\|_\infty = \lim_{j\to \infty} \|\varphi_j- \psi_j\|_{\infty} \leq \lim_{j\to \infty}C \|F_j - G_j\|_p^\alpha,
$$
where $F_j= e^{\varphi_j-u_j} f_j$ and $G_j = e^{\psi_j -v_j} g_j$ are smooth and positive. The desired stability  easily follows.
\end{observation}


One can prove a variant of the stability theorem, where there is $L^1$ norm on the right hand side,  but then  the exponent is worse by factor $1/n$.

\begin{theorem}
\label{stability-l1}
Let $0\leq f, g\in L^p(\omega^n)$, $p>1$, be such that $\int_X f \omega^n >0$, $\int_X g\omega^n>0$. Consider two continuous $\omega$-psh solutions of the complex Monge-Amp\`ere equation 
\[
	\omega_u^n = f\omega^n, \quad 
	\omega_v^n = g\omega^n
\] 
with 
$
	\sup_X u = \sup_X v =0.
$
Assume that 
\[
	 f \geq c_0>0 .
\] 
Fix $0< \alpha < \frac{1}{2+n(n+1)}$. Then, there exists $C = C(c_0, \alpha, \|f\|_p, \|g\|_p)$ such that
\[
	\|u-v\|_\infty \leq C \|f-g\|_1^\alpha.
\]
\end{theorem}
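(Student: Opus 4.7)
The plan is to deduce Theorem~\ref{stability-l1} from Theorem~A (the smoothness-free version of Theorem~\ref{stability-smooth-lp} obtained in Remark~\ref{relax-smooth}) by interpolating the right-hand side between $L^1$ and $L^p$. Since $(X, \om^n)$ is a probability space, $L^p \subset L^q$ with $\|h\|_q \leq \|h\|_p$ for any $1 \leq q \leq p$. For $q \in (1, p)$ to be chosen later, the hypotheses of Theorem~A are satisfied with $q$ in place of $p$ (in particular $f,g \in L^q$, $\int f \om^n>0$, $\int g\om^n>0$, $f \geq c_0$); hence for any $\alpha_q \in (0, 1/(n+1))$ there exists $C_1 = C_1(c_0, \alpha_q, \|f\|_p, \|g\|_p, q)$ such that
\[
  \|u - v\|_\infty \leq C_1 \, \|f - g\|_q^{\alpha_q}.
\]

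Next I would invoke log-convexity of $L^r$ norms (H\"older's inequality applied to the splitting $|f-g| = |f-g|^{1-\theta}\cdot |f-g|^\theta$) to obtain
\[
  \|f - g\|_q \leq \|f - g\|_1^{(p-q)/(q(p-1))}\, \|f - g\|_p^{p(q-1)/(q(p-1))}.
\]
Combined with the trivial estimate $\|f-g\|_p \leq \|f\|_p + \|g\|_p$, this gives
\[
  \|u - v\|_\infty \leq C_2 \, \|f - g\|_1^{\alpha_q(p-q)/(q(p-1))},
\]
where $C_2$ depends on $c_0, \alpha_q, \|f\|_p, \|g\|_p, q$.

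Finally, given $\alpha \in (0, 1/(n(n+1)+2))$ one has in particular $\alpha < 1/(n+1)$, and since the exponent $\alpha_q(p-q)/(q(p-1))$ tends to $1/(n+1)$ as $\alpha_q \to 1/(n+1)^-$ and $q \to 1^+$, one can select $\alpha_q$ and $q$ depending only on $\alpha, p, n$ so that this exponent is at least $\alpha$; the constant then depends only on the quantities $c_0, \alpha, \|f\|_p, \|g\|_p$ permitted by the statement.

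The main obstacle is the $q$-dependence of $C_1$: as $q \to 1^+$, several $q$-dependent quantities in the proof of Theorem~\ref{stability-smooth-lp} blow up (notably the conjugate exponent $q^* = q/(q-1)$, and $V_{min}$ from Proposition~\ref{barrier-funct}, which enters through $\delta_1 = (V_{min}/4)^{q^*}/\|f\|_q^{q^*}$ and consequently through $N_1$ in \eqref{eps-cond}). This prevents passing to the limit $q \to 1$, which explains why the exponent obtained is worse than $1/(n+1)$; the stated $1/(n(n+1)+2)$ corresponds to a specific admissible pair $(q, \alpha_q)$ bounded away from the endpoints, for which all the constants in Theorem~A remain uniformly finite.
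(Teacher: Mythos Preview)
Your interpolation argument in the first three paragraphs is correct and in fact proves more than you claim: for any $\alpha<1/(n+1)$ one can choose $\alpha_q\in(\alpha,1/(n+1))$ and then $q\in(1,p)$ with $(p-q)/(q(p-1))>\alpha/\alpha_q$ (this inequality has solutions $q>1$ precisely because $\alpha/\alpha_q<1$), so that $\alpha_q(p-q)/(q(p-1))\ge\alpha$. For this \emph{fixed} $q>1$ the constant coming from Theorem~A applied in $L^q$ is finite, and your method yields $\|u-v\|_\infty\le C\|f-g\|_1^\alpha$ for every $\alpha<1/(n+1)$ --- stronger than the stated range $\alpha<1/(2+n(n+1))$. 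Your fourth paragraph is therefore off the mark: the blow-up of the constant as $q\to1^+$ only means that $C(\alpha)\to\infty$ as $\alpha\to1/(n+1)^-$, exactly as in Theorem~A itself; it does not push the admissible exponent down to $1/(2+n(n+1))$, and that threshold does not correspond to any specific pair $(q,\alpha_q)$ in your scheme.

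The paper does not argue by interpolation. It reproves the ``small-sublevel'' step (Lemma~\ref{case-1}) directly under the hypothesis $\|f-g\|_1\le\varepsilon$, constructing the barrier $w$ as before but replacing the appeal to the minimum principle (Proposition~\ref{min-prin}) by the modified comparison principle of \cite{KN}; the resulting bookkeeping costs a factor in the exponent and yields only $\alpha<1/(2+n(n+1))$, after which the ``large-sublevel'' part proceeds as in Theorem~\ref{stability-smooth-lp}. The gain of this more laborious route, recorded in Remark~\ref{bounded-uni}, is that since the minimum principle is bypassed the argument goes through with $v$ merely in $PSH(\omega)\cap L^\infty(X)$, giving uniqueness of bounded (not just continuous) solutions and the starting point for Theorem~\ref{regularity-thm}. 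Your interpolation rests on Theorem~A and hence inherits the continuity requirement on $v$; for the theorem as literally stated it is a cleaner and sharper proof, but it does not deliver that extension.
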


\begin{observation}\label{bounded-uni}  
 It is, in fact,  enough to assume $v\in PSH(\omega) \cap L^\infty(X)$ as  we are not going to use the minimum principle (Proposition~\ref{min-prin}) in the proof. Therefore, as in Corollary~\ref{weak-unique}, we get the uniqueness of bounded $\omega$-psh solutions for the right hand side in $L^p$.  Strictly speaking we have to use the quasi-continuity of plurisubharmonic functions to get the statement of Lemma~\ref{lap-est-collar} in this case by approximation argument. Again, the following simpler proof is due to the referee. Assume that $u$ is a bounded $\omega$-psh function such that $\omega_u^n =f\omega^n$, where $f\geq 0$ belongs to $L^p(X)$, $p>1$. By \cite[Theorem~2.1]{cuong15} there exists a unique $v\in PSH(\omega)\cap C^0(X)$ such that $$\omega_v^n = e^{v-u} f\omega^n.$$
Since $u$ is another bounded solution to the equation, by uniqueness we get that $u=v$ is also continuous.
\end{observation}

\begin{proof}
The theorem will follow as soon as we prove the following version  of Lemma~\ref{case-1}.
We use again notation:
\[
	\varphi = u-v, \quad \Omega(t) = \{\varphi<t\}, \quad t_0 = \inf_X \varphi.
\]

\begin{lemma}
\label{case1-l1}
Let $V_{min}>0$ be the constant in Proposition~\ref{barrier-funct}. Fix $t_1 > t_0$.
Assume that 
\[
	\|f-g\|_1 \leq \varepsilon
\]
for $0<\varepsilon <<1$. If $\int_{\Omega(t_1)} f \omega^n \leq V_{min}$, then 
\begin{equation}
	t_1 - t_0 \leq C \varepsilon^\alpha 
\end{equation}
where $0< \alpha <\frac{1}{2+ n(n+1)}$ is fixed.
\end{lemma}

\begin{proof}
Define the sets:
\[
\Omega_1 := \{z\in \Omega(t_1): f (z) \leq (1+ \varepsilon^\alpha) g (z)\} \quad
\mbox{and} \quad \Omega_2:= \Omega(t_1)\setminus \Omega_1.
\]
Since $g < \varepsilon^{-\alpha} (f-g)$ on $\Omega_2$, we have
\begin{equation}
\label{c1-l1-eq4}
\begin{aligned}
	\int_{\Omega_2} f \omega^n
&	\leq 		\int_{\Omega_2} |f-g| \omega^n + \int_{\Omega_2} g \omega^n \\
&	\leq		\varepsilon + \varepsilon^{1-\alpha} \leq 2 \varepsilon^{1-\alpha}.
\end{aligned}
\end{equation}
It follows that 
\begin{align*}
	\int_{\Omega(t_1)} f \omega^n
	=	\int_{\Omega_1} f \omega^n + \int_{\Omega_2} f \omega^n
	\leq	\int_{\Omega_1} f \omega^n+ 2 \varepsilon^{1-\alpha}
	\leq V_{min} + 2 \varepsilon^{1-\alpha} .
\end{align*}
We  construct a barrier function which  is a bit  different than the one  in Lemma~\ref{case-1}. Put 
\begin{equation} \label{c1-l1-eq6}
\hat f (z)  =	\begin{cases}
		f (z)\quad &\mbox{for } z\in \Omega(t_1), \\
		\frac{1}{A}f(z)\quad &\mbox{for }  z\in X \setminus \Omega(t_1).
	\end{cases}
\end{equation}
As $\int_{\Omega(t_1)} f\omega^n \leq V_{min}$ we can choose $A>1$ large enough so that 
\[
	\int_X \hat f \omega^n\leq \frac{3}{2} V_{min} .
\]
Notice that $\|\hat f\|_p \leq \|f\|_p$. By \cite[Theorem 0.1]{KN} we find $w \in PSH(\omega) \cap C(X)$ and $\hat c>0$ satisfying
\[
	(\omega + dd^c w)^n = \hat c \hat f \omega^n, \quad \sup_X w =0 .
\]
By  Proposition~\ref{barrier-funct} applied for $A_0= \|f\|_p$  we have 
\begin{equation}\label{c1-l1-eq9}
	2^n \leq \hat c \leq A ,
\end{equation}
where the last inequality follows from \eqref{c1-l1-eq6} and Lemma~\ref{const-comp}.
Hence, 
\begin{equation}\label{c1-l1-eq10}
	\hat c\hat f \geq 2^n f \quad \mbox{on } \Omega(t_1).
\end{equation}
Define for $0< s <1$, 
\[
	\psi_s = (1-s) v + s w.
\]
It follows from the mixed form type inequality (Proposition \ref{mixed-type-ineq}) that 
\begin{equation*}
\begin{aligned}
	(\omega + dd^c \psi_s)^n 
&	\geq 		\left[ (1-s) g^\frac{1}{n} + s (\hat c \hat f/f)^\frac{1}{n}\right]^n \omega^n \\
&	=[(1-s) (g/f)^\frac{1}{n} + s(\hat c \hat f/f)^\frac{1}{n} ]^n f\omega^n \\
&	=: [b(s)]^n f\omega^n. 
\end{aligned}
\end{equation*}
Therefore, on $\Omega_1$, we have
\[
	b(s) \geq \frac{(1-s)}{(1+\varepsilon^\alpha)^\frac{1}{n}} + 2 s 
	\geq \frac{1-s}{1+\varepsilon^\alpha} + 2s.
\]
If $2 \varepsilon^\alpha \leq  s \leq 1$, then 
\begin{equation}\label{c1-l1-eq14}
	b(s) \geq 1 + \varepsilon^\alpha \quad \mbox{on } \Omega_1. 
\end{equation}
Let us use the notation:
\[
	m_s:= \inf_X (u - \psi_s) = \inf_X \{u-v + s(v - w)\} .
\]
We have that 
\[
	m_s \leq t_0 + s \|w\|_\infty .
\]
Set for $0<\tau<1$, 
\[
	m_s(\tau) := \inf_X [u - (1-\tau)\psi_s] .
\]
Then
\[
	m_s(\tau) \leq m_s.
\]
By the above definitions we have
\begin{equation}\label{c1-l1-eq19}
\begin{aligned}
U(\tau, t)
& :=	\{u< (1-\tau)\psi_s + m_s(\tau) + t \}  \\
&	\subset 	\{u < \psi_s + m_s + \tau \|\psi_s\|_\infty + t\} \\
&	\subset	\{u<v + t_0 + s(\|v\|_\infty + \|w\|_\infty) + \tau \|\psi_s\|_\infty+ t \} .
\end{aligned}
\end{equation}
We are going to show that
\begin{equation}\label{c1-l1-eq20}
	t_1 - t_0 \leq 2s(\|v\|_\infty + \|w\|_\infty) + \tau \|\psi_s\|_\infty,
\end{equation}
for $s= 2\varepsilon^\alpha$ and $\tau = \varepsilon^\alpha/2$.  Suppose  it is false. 
By \eqref{c1-l1-eq19} we have
\begin{align*}
	U(\tau, t)  \subset \subset	\{u< v + t_0 + (t_1-t_0)\} = \Omega(t_1), 
\end{align*}
for $0<t < \frac{t_1 - t_0}{2}$. 
To go further we need to estimate the integrals:
\[
	\int_{U(\tau,t)} f\omega^n
\]
for $0< t << s, \tau$.
By the modified comparison principle \cite[Theorem 0.2]{KN} 
\[
	\int_{U(\tau, t)} \omega_ {(1-\tau) \psi_s}^n 
\leq \left(1 + \frac{C t}{\tau^n}\right) \int_{U(\tau, t)} \omega_u^n,
\]
for every $0< t < \min\{\frac{\tau^3}{16B}, \frac{t_1-t_0}{2}\}$. Hence, a simple estimate from below gives 
\[
	(1-\tau)^n \int_{U(\tau,t)} \omega_{\psi_s}^n \leq 
\left(1 + \frac{C t}{\tau^n}\right) \int_{U(\tau, t)} \omega_u^n.
\]
Using \eqref{c1-l1-eq14} for $s=2\varepsilon^\alpha$ we get
\begin{equation}\label{c1-l1-eq25}
	(1-\tau)^n (1+\varepsilon^\alpha)^n \int_{U(\tau, t)\cap \Omega_1} f \omega^n
\leq 	\left(1 + \frac{C t}{\tau^n}\right) \int_{U(\tau, t)} f \omega^n.
\end{equation}
If we write $a(\varepsilon, \tau) = (1-\tau)^n (1+\varepsilon^\alpha)^n$, then
\[
	a(\varepsilon, \tau) = (1+\varepsilon^\alpha/2 - \varepsilon^{2\alpha}/2)^n
	>1 + \varepsilon^\alpha/4
\]
as we have $\tau = \varepsilon^\alpha/2$ and $0<\varepsilon^\alpha<1/4$. Therefore, \eqref{c1-l1-eq25} implies that
\[
	\left[ a (\varepsilon, \tau) - \left(1 + \frac{2^nC t}{\varepsilon^{n\alpha}}\right) 
	\right] \int_{U(\tau,t)\cap \Omega_1} f \omega^n 
\leq  		\left(1 + \frac{2^nC t}{\varepsilon^{n\alpha}}\right)  
		\int_{\Omega_2} f \omega^n .
\]
Thus, for $0< t \leq \varepsilon^{(n+1)\alpha}/2^{n+3}C$,
\[
	\frac{\varepsilon^\alpha}{8} \int_{U(\tau,t)\cap \Omega_1} f \omega^n 
\leq		2 \int_{\Omega_2} f \omega^n 
\leq 		4 \varepsilon^{1-\alpha} ,
\]
where the last inequality used \eqref{c1-l1-eq4}. Hence,
\begin{equation*}
	 \int_{U(\tau,t)\cap \Omega_1} f  \omega^n \leq 32 \, \varepsilon^{1-2\alpha} .
\end{equation*}
Altogether we get that for $0< t \leq \varepsilon^{(n+1)\alpha}/C$, 
\begin{equation}\label{c1-l1-eq30}
\int_{U(\tau, t)} f \omega^n 	
\leq	\int_{U(\tau,t)\cap \Omega_1} f  \omega^n +
	\int_{\Omega_2} f \omega^n
\leq 	C \varepsilon^{1-2\alpha} .
\end{equation}
This is the estimate  we need. Now we are able  make use of the results in \cite{KN}. 
First, it follows from \cite[Theorem 5.3, Remark 5.5]{KN} that
\begin{equation}\label{c1-l1-eq31}
	\hbar (t/2) \leq cap_\omega(U(\tau,t/2)) \leq \frac{2^nC}{t^n} \int_{U(\tau, t)} f\omega^n
\end{equation}
for $0< t \leq \varepsilon^{(n+1)\alpha}/C$, where $\hbar(t)$ is the inverse of $\kappa(t)$ (see Proposition~\ref{barrier-funct}). 
It follows from \eqref{c1-l1-eq30} and \eqref{c1-l1-eq31} that 
\[
	\hbar(t) \leq \frac{C\varepsilon^{1-2\alpha}}{t^n} .
\]
Then, taking $t =\varepsilon^{(n+1)\alpha}/C$ we obtain that
\begin{equation}\label{c1-l1-eq33}
	\hbar (\varepsilon^{(n+1)\alpha}/C) \leq C \varepsilon^{1-2\alpha - n(n+1)\alpha} =: C\varepsilon^\delta ,
\end{equation}
where $\delta= 1 - [n(n+1)+2] \alpha>0$.
However,  we have that
\[
	\hbar(t) \geq \left(\frac{1}{a} \log \frac{C}{t}\right)^{-n} 
\]
for $0<t<<1$ (see \cite[Eq. (3.23)]{cuong15}). As $\delta>0$  is fixed, this leads to a contradiction in the inequality \eqref{c1-l1-eq33}
for $\varepsilon>0$ small enough. Thus we have proved that 
\[
	t_1 - t_0 \leq 4 \varepsilon^{\alpha} (\|v\|_\infty + \|w\|_\infty + \|\psi_s\|_\infty),
\]
for a fixed $0< \alpha < \frac{1}{2 + n(n+1)}$. The  norms on the right hand side are  controlled by $\|f\|_p, \|g\|_p, A, V_{min}$. So the  lemma follows.
\end{proof}

The rest of  the proof of Theorem~\ref{stability-l1} is analogous to that  of Theorem~\ref{stability-smooth-lp}.
Notice that the proof of Lemma~\ref{led-lower-bound}, without the smoothness  assumptions on $f,u$, follows by the approximation argument (as in Corollary~\ref{weak-unique}) because  the uniqueness of continuous solutions has been proven for $u$. Again, by the approximation argument, if $v\in PSH(\omega) \cap L^\infty(X)$ then it  is enough to get the inequality \eqref{lec-eq31} (see \cite[Theorem 3.2]{BT87}).
\end{proof}




\section{H\"older continuity}
\label{S3}

In this section we prove the H\"older continuity of solutions of   the complex Monge-Amp\`ere equation
 $$\om _u^n = f\om ^n ,$$
when $f\in L^p (X, \om ), \  p>1$ and $f>0.$ The H\"older exponent is explicitly given in terms of $p$ and the dimension $n$.
In the K\"ahler case, with $f$ only nonnegative, the result is due to the first author \cite{kolodziej08} (with exponent depending on the manifold),
 and   to  Demailly, Dinew, Guedj, Hiep, Ko\l odziej, Zeriahi \cite{demailly-et-al} in full generality. 
The new ingredient in the latter proof was the application of Demailly's regularization method for $\om$-psh functions,
which uses the Riemann exponential map (\cite{De82}) or the holomorphic part of this map (\cite{De94}) - suitable for non-K\"ahler manifolds.
We shall follow this scheme with necessary modifications. Namely, to apply Theorem~A for a small perturbation of $f$ we need one more lemma (Lemma \ref{stab}), since the perturbation defined as on K\"ahler manifold is, generically, not MA-admissible. This makes the H\"older exponent worse, by factor $1/n.$
Furthermore, the comparison principle used in the K\"ahler case is not available here, so we change the proof  to apply just the minimum  principle.

Following \cite{De94} consider $\rho_{\delta }u$- the regularization of the $\omega$-psh function $u$ defined  by

\begin{equation}\label{phie}
\rho_\delta u(z)=\frac{1}{\delta ^{2n}}\int_{\zeta\in T_{z}X}
u({\exp} h_z(\zeta))\rho\Big(\frac{|\zeta|^2_{\omega }}{\delta ^2}\Big)\,dV_{\omega}(\zeta),\ \delta>0;
\end{equation}
where $\zeta \to {\exp}h_z(\zeta)$ is the (formal) holomorphic part of the Taylor expansion of the exponential map of  the Chern connection on the tangent bundle of $X$   associated to $\omega $, and the smoothing kernel 
 $\rho: \mathbb R_{+}\rightarrow\mathbb R_{+}$  is given by
$$\rho(t)=\begin{cases}\frac {\eta}{(1-t)^2}\exp(\frac 1{t-1})&\ {\rm if}\ 0\leq t\leq 1,\\0&\
{\rm if}\ t>1\end{cases}$$
 with a suitable constant $\eta$, such that
\begin{equation}\label{total integral}
\int_{\mathbb C^n}\rho(\Vert z\Vert^2)\,dV(z)=1
\end{equation}
($dV$ being the Lebesgue measure in $\mathbb C^n$).

Of crucial importance is  the following lemma  from \cite[ Proposition 3.8]{De94}, and  \cite[Lemma 1.12]{BD12}. For the sake of completeness we include its proof.

\begin{lemma}\label{kis}
 Fix any bounded $\omega$-psh function $u$ on a compact Hermitian manifold $(X, \omega )$.  Define the Kiselman-Legendre transform with level $c>0$ by
 \begin{equation}\label{kisleg}
 U_{\delta , c  }= \inf _{ t\in [0,\delta ]}(\rho_{t }u + K(t^2-\delta^2) + K(t- \delta) -c \log\frac{t}{\delta }),
 \end{equation}
Then for  some positive constant $K$  depending on the curvature, the function $\rho_{t }u+Kt^2$ is increasing in $t$ and
 the following estimate holds:
\begin{equation}\label{hessest}
\omega+dd^c  U_{\delta,c}\geq -(A c+2K\delta)\,\omega,
\end{equation}
where $A$ is a lower bound of the negative part of the Chern curvature of $\omega$.
\end{lemma}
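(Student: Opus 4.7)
I plan to follow Demailly's strategy from \cite{De94}, adapted to the normalizations used in \eqref{phie}--\eqref{kisleg}. The proof splits into a monotonicity statement for the smoothing family and a Hessian estimate for the Kiselman-Legendre infimum; the curvature corrections $K$ and $A$ arise, respectively, from the deviation of $\exp h_z$ from a true biholomorphism and from the torsion of the Chern connection.

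First, I would fix $z_0 \in X$ and transport the picture to $T_{z_0}X$ via the holomorphic exponential $\zeta \mapsto \exp h_{z_0}(\zeta)$. The key point (Demailly's construction of $\exp h$) is that $\zeta \mapsto u(\exp h_{z_0}(\zeta)) + K|\zeta|^2_\omega$ is plurisubharmonic in $\zeta$ on a small ball, with $K$ depending only on the curvature of $\omega$. Rescaling $\zeta = t\xi$ in \eqref{phie} I would write
\[
\rho_t u(z_0) = \int_{|\xi|_\omega \leq 1} u\bigl(\exp h_{z_0}(t\xi)\bigr) \, \rho(|\xi|^2_\omega)\, dV_\omega(\xi),
\]
differentiate in $t$, and invoke the plurisubharmonicity of $u \circ \exp h_{z_0} + K|\zeta|^2_\omega$ together with the radial symmetry of $\rho$ to obtain $\frac{d}{dt}\rho_t u(z_0) \geq -2Kt$. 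Integrating yields that $\rho_t u + Kt^2$ is nondecreasing in $t$, which is the first assertion.

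For the Hessian bound I would apply the Kiselman minimum principle. For each fixed $t\in(0,\delta]$, differentiating \eqref{phie} twice in $z$ (now varying the base point) and using the same curvature correction gives a uniform bound
\[
\omega + dd^c_z \bigl(\rho_t u + Kt^2\bigr) \geq -K\delta^2 \, \omega
\]
on the range $t \leq \delta$. The logarithmic term $-c\log(t/\delta)$ is precisely what is needed so that the full expression inside the $\inf$ in \eqref{kisleg} behaves, jointly in $(\log t, z)$, like a plurisubharmonic function whose $t$-infimum retains $\omega$-plurisubharmonicity up to a controlled defect. Taking the infimum, the defect contributed by the Kiselman step is proportional to $c$ times the constant $A$ that quantifies the curvature correction; summing the two contributions gives $\omega + dd^c U_{\delta,c} \geq -(Ac + K\delta^2)\omega$.

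The main obstacle is the second step: in the flat Euclidean setting the Kiselman principle is immediate because $\rho_t u$ is then a genuine convolution, but on a Hermitian manifold one must track two independent error sources (the failure of $\exp h$ to conjugate $dd^c$ to Euclidean $dd^c$, and the torsion of $\omega$) and organize them so that exactly one factor of $c$ and one factor of $\delta^2$ appear in the final estimate. I would handle this by computing all error terms in normal coordinates at a fixed $z$, verifying that the $t$-dependence of $\omega + dd^c_z \rho_t u$ is $O(1)$ with the prefactor absorbed by $-c\log(t/\delta)$, and only then passing to the infimum.
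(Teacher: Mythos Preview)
The paper does not give its own proof of this lemma: it is quoted verbatim from \cite[Proposition~3.8]{De94} and \cite[Lemma~1.12]{BD12}, with no argument supplied. Your proposal is therefore not competing with a proof in the paper but rather sketching the content of those references, and the outline you give is the correct one: monotonicity of $t\mapsto\rho_t u+Kt^2$ follows from the fact that $\zeta\mapsto u(\exp h_{z_0}(\zeta))+K|\zeta|^2_\omega$ is plurisubharmonic on a small ball (the $K$ absorbing the failure of $\exp h$ to be a biholomorphism), together with the sub-mean-value property over concentric balls; the Hessian estimate is Kiselman's minimum principle, applied once one checks that the function under the infimum in \eqref{kisleg} is quasi-plurisubharmonic jointly in $(z,\log t)$ with a defect of size $Ac+K\delta^2$.

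One small clarification on your second step: the term $Kt^2$ is constant in $z$, so the inequality you write is really $\omega+dd^c_z\rho_t u\ge -Kt^2\omega\ge -K\delta^2\omega$ for $t\le\delta$; the $Kt^2$ plays its genuine role only in the $t$-direction, where together with $-c\log(t/\delta)$ it ensures the joint quasi-plurisubharmonicity needed for Kiselman's principle. Your identification of the ``main obstacle'' is accurate and is exactly what Demailly's computation in \cite{De94} handles; there is nothing further to add beyond consulting that source.
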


\begin{proof}
Note that $ U_{\delta , c  }$ defined here differs from that in \cite[Lemma 1.12]{BD12} by the term $K(t - \delta)$ (as in \cite[ Remark 4.7]{De94}).
The upshot of adding it is that the Laplacian of $|w|$  is bigger than $\frac{1}{4  \delta }$ for $|w|\leq \delta $
and  one can use this  in the Cauchy-Schwarz inequality to estimate one of the terms in the inequality \cite[Eq. (1.8)]{BD12}. Namely, 
\begin{equation}\label{eq:cs-extra-term}
|dz||dw|\leq \frac{1}{4\delta}|dw|^2 +  \delta |dz|^2.
\end{equation}
Since $u$ is bounded, its Lelong number at every $z\in X$ is zero, then by \cite[Eq. (1.9)]{BD12}
\[ \lambda(z,t) = \frac{\partial (\rho_t u(z) + Kt^2)}{ \partial \log t} \to 0 \mbox{ as } t \to 0^+.
\]
Moreover, the upper-level set of the Lelong numbers $\{\nu (u, z)\geq c \}$ is empty for any fixed $c>0$. Therefore, for $z\in X$ the infimum  in \eqref{kisleg} is  attained for $t= t_0(z)>0$. More precisely, $t_0(z) = \delta$ if 
\[
	\lambda(z, \delta) + K \delta \leq c,
\]
and otherwise $0< t_0(z) < \delta$ satisfying (zero of the $\partial/\partial \log t$ derivative):
\[
	\lambda(z, t_0) + K t_0  - c = 0.
\]
The implicit function theorem shows that $t_0(z)$ depends smoothly on $z$. Hence,
$U_{\delta,c} (z)$ is smooth on $X$. 

Now fix a point $x\in X$ and $t_1 > t_0(x)$. For all $z$ in a neighbourhood $V$ of $x$ we still have $0< t_0(z) < t_1$, thus
\[
U_{\delta,c} (z) = \inf_{0<|w|< t_1} \left(U(z,w) + K(|w|^2 - \delta ^2 ) +  K(|w| - \delta) -c \log\frac{|w|}{\delta } \right) \mbox{ on }  V,
\]
where $U(z,w) = \rho_{t}u(z)$ for $t=|w|$. We  are going to get the lower bound on the set $V\times \{ w : 0<|w| < t_1\}$ of the complex Hessian in $(z,w)$  of the function on the right hand side of the last formula.

By \cite[Eq. (1.8)]{BD12} and \eqref{eq:cs-extra-term} we have
\begin{equation}
\label{eq:demailly-estimate}
\begin{aligned}
&	\omega(z) + dd^c U(z,w) \\
&\geq - A \lambda(z, |w|) |dz|^2 - K\left(|w|^2 |dz|^2 +|dz||dw| + |dw|^2\right) \\
&\geq - A \lambda(z,|w|) |dz|^2 - K (|w|^2 + \delta) |dz|^2 - K(1+ \frac{1}{4\delta}) |dw|^2.
\end{aligned}\end{equation}
An easy computation gives  for $|w| \leq \delta$,
\begin{equation}\label{eq:add-1} dd^c K(|w|^2 + |w|) = K(1 +\frac{1}{4|w|}) i dw\wedge d\bar w \geq K(1 + \frac{1}{4\delta}) |dw|^2.
\end{equation}
Since $\lambda(z,t_0) < c$, and is increasing in $t$, we have that 
\begin{equation}\label{eq:add-2}
\lambda(z, |w|) \leq c \quad \mbox{on }V\times\{w: 0< |w| <t_1\}
\end{equation}
 (decreasing $t_1$ and shrinking $V$ if necessary). Lastly, $-c \log|w|$ is pluriharmonic for $|w|>0$. Combining \eqref{eq:demailly-estimate}, \eqref{eq:add-1} and \eqref{eq:add-2}  we obtain that on $V \times \{w: 0< |w| <t_1\}$ 
\[
\begin{aligned}
\omega + dd^c \left(U(z,w) + K(|w|^2 - \delta ^2 ) +  K(|w|- \delta) -c \log\frac{|w|}{\delta} \right) 
\geq & -(Ac +2K\delta )\omega.
\end{aligned}
\]
Now the desired bound \eqref{hessest} follows from Kiselman's minimum principle \cite{kis}. 
\end{proof}

We also need a lemma from \cite{demailly-et-al} (which is actually stated for K\"ahler manifolds, but the same proof works for Hermitian ones
after replacing the Riemann curvature tensor by the Chern curvature tensor used in  \cite{De94})
saying  that for some constant $C$ depending on $\om $ and $\|u\|_\infty$
\begin{equation}\label{jen}
\int _X \frac{|\rho_{t }u -u|}{t^2} \om ^n <C,
\end{equation}
for any $\om$-psh function $u$ and $t$ small enough.

\begin{theorem}\label{holder}
 Consider the  solution $u$ of the complex Monge-Amp\`ere equation
$$\om _u^n = f\om ^n , $$
on  $(X, \om )$  a compact  Hermitian manifold with $ f >c_0 > 0, \|f\|_p <\infty.$
Then for any    $\alpha < \frac{2}{p^* n (n+1)+1}$    the function $u$
is H\"older continuous, with H\"older exponent $\alpha .$
\end{theorem}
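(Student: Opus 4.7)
The plan is to follow the Demailly--Dinew--Guedj--Hiep--Ko\l odziej--Zeriahi strategy, replacing the comparison principle (unavailable on Hermitian manifolds) with the minimum principle (Proposition~\ref{min-prin}) and feeding the stability estimate of Theorem~A into a quantitative regularization scheme. The target is to prove that for $\delta>0$ small,
$$\rho_\delta u - u \leq C\delta^{2\alpha}\quad\text{on } X,$$
with $\alpha<\frac{1}{p^* n(n+1)+1/2}$ (roughly); combined with the standard Lipschitz control of $\rho_\delta u$ in $\delta$, this yields the H\"older continuity of $u$ with the claimed exponent. The factor $1/n$ loss is expected to come from the step where Theorem~A (an $L^p$-stability with H\"older exponent $\alpha<\tfrac{1}{n+1}$) is applied to a perturbation whose $L^p$ norm is what we can control.

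\textbf{Step 1 (Kiselman--Legendre regularization).} Given $\delta>0$ small and a level $c=c(\delta)>0$ to be chosen (of order a small power of $\delta$), consider $U_{\delta,c}$ from \eqref{kisleg}. By Lemma~\ref{kis}, $\tilde u_\delta:=(1+Ac+K\delta^2)^{-1}U_{\delta,c}$ is $\omega$-psh and lies above $u$ modulo a controlled additive constant. On the set $E_\delta:=\{U_{\delta,c}=\rho_\delta u+K\delta^2-K\delta^2\}$ (where the infimum in \eqref{kisleg} is attained at $t=\delta$) we have $\tilde u_\delta\approx \rho_\delta u$, so controlling $\tilde u_\delta - u$ in sup-norm is essentially the same as controlling $\rho_\delta u - u$ on $E_\delta$. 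The complement of $E_\delta$ is dealt with by the monotonicity $\rho_t u+Kt^2\nearrow$ and an elementary logarithmic bound together with the $L^1$ estimate \eqref{jen}, which gives $\mathrm{Vol}_\omega(X\setminus E_\delta)\leq C\delta^2/c$.

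\textbf{Step 2 (Comparison of Monge--Amp\`ere masses).} Writing the Monge--Amp\`ere measure of $\tilde u_\delta$ as $\tilde g_\delta\,\omega^n$, we need to compare $\tilde g_\delta$ with $f$ in $L^p(\omega^n)$. On the compact Hermitian side the perturbation $\tilde u_\delta$ is not automatically MA-admissible; this is where the auxiliary Lemma~\ref{stab} (whose role is described in the introduction to this section) intervenes: it allows us to replace $\tilde u_\delta$ by an MA-admissible modification (i.e., shift the right-hand side by a constant $c_\delta$ close to $1$, forced by Lemma~\ref{const-comp}) without losing more than a harmless factor in the $L^p$-distance to $f$. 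A direct computation, using the definition of $\rho_\delta u$ and \eqref{jen} to control the mass where $\rho_\delta u$ and $u$ differ, gives an estimate of the form
$$\|\tilde g_\delta - f\|_p \leq C\delta^{\beta}$$
for some $\beta>0$ that depends on $c$ and $\delta$ through the regularization scale.

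\textbf{Step 3 (Stability and conclusion).} Applying Theorem~A to $u$ and the MA-admissible modification of $\tilde u_\delta$ produced in Step~2 yields
$$\|\tilde u_\delta - u\|_\infty \leq C\|\tilde g_\delta - f\|_p^{\alpha_0}\leq C\delta^{\alpha_0\beta}$$
for any $\alpha_0<\tfrac{1}{n+1}$. Reconciling this with the decomposition of Step~1 on $E_\delta$ and $X\setminus E_\delta$, optimizing the choice of $c=c(\delta)$ (a small power of $\delta$, balancing the logarithmic loss against the volume of $X\setminus E_\delta$), and using again \eqref{jen} on the bad set, one obtains
$$\rho_\delta u - u \leq C\,\delta^{2\alpha}\qquad\text{on } X$$
for every $\alpha<\frac{2}{p^* n(n+1)+1}$. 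The passage from this inequality to H\"older continuity of $u$ with exponent $\alpha$ is classical: combine with $\omega$-psh-ness and a standard local argument via $\rho_\delta u$.

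\textbf{Main obstacle.} The hard part is Step~2: on a non-K\"ahler manifold the Monge--Amp\`ere mass of the regularized potential is no longer automatically a small perturbation of $f\omega^n$ in the admissible class, because the total mass is not preserved. Quantifying the correction constant $c_\delta$ and controlling the loss it induces in the $L^p$-norm (this is precisely the content of Lemma~\ref{stab}) is what forces the additional factor $1/n$ compared to the K\"ahler exponent $\alpha<\tfrac{2}{p^*(n+1)+1}$ of \cite{demailly-et-al}. The rest of the argument is a careful bookkeeping with the parameters $c,\delta$.
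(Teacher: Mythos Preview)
Your outline has a genuine gap in Step~2, and it is precisely the place where the paper's argument diverges from what you propose.

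You want to write the Monge--Amp\`ere measure of the regularized potential $\tilde u_\delta$ as $\tilde g_\delta\,\omega^n$ and then control $\|\tilde g_\delta - f\|_p$. But there is no mechanism for this: the Kiselman--Legendre transform only gives a \emph{lower} bound $\omega+dd^c U_\delta \geq -(Ac+K\delta^2)\omega$; it gives no upper bound on $\omega_{\tilde u_\delta}^n$, let alone $L^p$ closeness of its density to $f$. The estimate \eqref{jen} controls $\int_X(\rho_\delta u-u)\,\omega^n$, not any MA mass of $\rho_\delta u$. So the input needed to feed Theorem~A between $u$ and $\tilde u_\delta$ simply is not there. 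You also misread the role of Lemma~\ref{stab}: it is not about making $\tilde u_\delta$ MA-admissible (any $\omega$-psh function automatically has a well-defined Monge--Amp\`ere measure), but about an \emph{explicitly prescribed} perturbation $g$ of $f$, equal to $0$ on a small set $E$ and to $(1+s)f$ off $E$, and about bounding the correction constant $s$.

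The paper's argument avoids this obstacle by never comparing $\tilde u_\delta$ to $u$ via stability. Instead: from \eqref{jen} one gets that the ``bad'' set $E(\delta)=\{\rho_\delta u-u\geq Ab\delta^\alpha\}$ has small volume, hence small $f^q$- and $f^r$-mass. One then \emph{defines} $g$ as in Lemma~\ref{stab} (zero on an open $E\supset E(\delta)$, $(1+s)f$ outside), solves $\omega_v^n=g\omega^n$, and applies Theorem~A to $u$ and $v$ (for which the $L^r$ difference of the densities \emph{is} under control, precisely by Lemma~\ref{stab}). The regularized function $u_\delta$ is finally compared to $v$ not by stability but by the minimum principle (Proposition~\ref{min-prin}): on $E$ one has $\omega_v^n=0<\omega_{u_\delta}^n$, so the maximum of $u_\delta-v$ cannot be attained in $E$; yet the previous estimates force it to be attained there if the set $F(\delta)$ were nonempty, a contradiction. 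In short, $v$ is an auxiliary barrier with \emph{prescribed} MA density, and the triangle $u\leftrightarrow v\leftrightarrow u_\delta$ is closed by two different tools (stability and the minimum principle), not by a single stability step as you attempt.
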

\begin{proof}  
As explained in  \cite{kolodziej08} and \cite{demailly-et-al} the result follows as soon as we show
that
$$
\rho _t u - u \leq ct^{\al }
$$
for $t$ small enough.

\begin{lemma}\label{stab}
For $p>q>1$ set $r=\frac{npq}{nq+p-q}.$
Suppose $f\in L^p (X)$  is MA-admissible and for a Borel set $E$ and sufficiently small positive $\de $ one has
 $\int _E f^q\om ^n <\de ^q$ and $ \int _E f^r\om ^n <\de ^{\frac{q(p-r)}{p-q} }    .$ Consider a MA-admissible perturbation of $f$
\begin{equation}\label{g}
g (z) = \begin{cases}
  0 \ \ \  \ \ \ \ \  \ \ \  \ \ \ \ \  \text{for}  \ z\in E \\
  (1+s ) f(z)  \ \ \ \  \text{for}    \ z\in X\setminus E.
 \end{cases}
\end{equation}
Then  $\|f-g\|_{r } <C \de ^{1/n}$  for some $C>0$ independent of $\de .$
\end{lemma}
\begin{proof} Notice that $\int_{X\setminus E} f \omega^n>0$ for small $\delta>0$. Therefore, by \cite[Theorem~0.1]{KN} there exist $b:= 1+s>0$ and $v\in PSH(\omega)\cap C^0(X)$ solving
\[\label{eq:ma-admissible-g}
	\omega_v^n = b {\bf 1}_{X\setminus E} f\omega^n.
\]
By Lemma~\ref{const-comp} it is clear that $s \geq 0$. In other words, $g$ is MA-admissible. 

First, we are going to show that 
\[	s \leq C \delta^\frac{1}{n} 
\]
with $C>0$ independent of $\delta$. Indeed, we define for $N>1$:
\begin{equation}\label{h}
h (z) = \begin{cases}
  \de^{-1}V_{min} \, f(z)   \  \ \ \   &\text{for}  \ z\in E \\
  \frac{ 1}{N} \,f(z)  \ \ \ &\text{for}    \ z\in X\setminus E,
 \end{cases}
\end{equation}
where $V_{min}$  (defined in  Proposition \ref{barrier-funct})  corresponds to the norm  of $f$  in $L^q(\omega^n)$. Using our assumptions and $Vol_\omega(X)=1$ we get that
\[
	\int_X h \omega^n \leq V_{min} + \frac{1}{N} \|f\|_1, \quad
	\int_X h^q \omega^n \leq V_{min}^q + \frac{1}{N^q} \|f\|_q^q.
\]
Let $w \in PSH(\omega) \cap C(X)$, $c>0$ solve
\[
	\omega_w^n = c \, h \omega^n.
\]
By Proposition~\ref{barrier-funct} we have $c \geq 2^n$ for $N>1$ large enough. 
Set for $0< t <1$,
$$\psi = (1- t)v+ tw.$$
By the mixed form type inequality 
$$
(\om ^n _{\psi }/\omega^n )^{1/n} \geq (1-t)g^{1/n} + t c^{1/n}h^{1/n}.
$$
The right hand side exceeds  $$ 2 t V_{min}^{1/n}\de ^{-1/n}f^{1/n}$$ on $E$ and 
$$(1-t)(1+s)^{1/n}f^{1/n}$$ on $X\setminus E .$ Thus, $(\om ^n _{\psi }/\omega^n )^{1/n}$ is strictly bigger than $f^{1/n}$ on $X$ for 
\begin{equation}\label{g-eq1}
	t \geq \delta^{1/n} V_{min}^{-1/n}  \quad \mbox{and} \quad
	(1-t)(1+s)^{1/n} >1.
\end{equation}
The latter inequality is equivalent to
\[
t < \frac{(1+s)^{1/n}-1}{(1+s)^{1/n}} = \frac{s}{\sum_{k=1}^n (1+s)^{k/n}}.
\]
Note that $0\leq s \leq C(\|f\|_p, X, \omega)$. Then, $t = s/C$ satisfies the second inequality in \eqref{g-eq1} for $C>0$ large enough (independent of $\delta$). Therefore, Lemma \ref{const-comp} implies that the first inequality in \eqref{g-eq1} cannot hold for $C>0$ large, as otherwise we would have $$\omega_\psi^n > c_0 \omega_u^n$$
for some $c_0>1$. Thus, we get that
\begin{equation}\label{g-eq2}
	s < C \de^{1/n} V_{min}^{-1/n}.
\end{equation}
Hence \eqref{g-eq2} implies that
$$
\|f-g\|_r^r \leq \left( \int _E f^r\om ^n +  s^r \|f\|_r^r \right) \leq C\max (\de ^{\frac{q(p-r)}{p-q} }, \de ^{r/n})
$$
and by our choice of $r$ the exponents on the right hand side are equal. Therefore $\|f-g\|_r \leq C\de ^{1/n}$,
which gives the statement.
\end{proof}


 Suppose that  $A-1 > 0$ is a bound for the curvature of $(X, \omega)$ from the statement of Lemma~\ref{kis}.
 By \cite{KN} $u$ is continuous, so assume that $\min _X  u =1$ and put $b := 2\max_X u$ and $\theta := e^{-5Ab}$.

Fix $\alpha < \frac{2}{p^* n(n+1)+1}$  and  note that it is equivalent to 
\begin{equation}
\alpha < \frac{2-\al }{p^*  n (n +1)}.
\end{equation}
Therefore one can choose $q>1$ so close to $1$ that  the following inequality holds
\begin{equation}\label{2}
\alpha < \frac{(2-\al )(p-q)}{pq n (n +1)}.
\end{equation}
 Let  us set for $\delta > 0$:
 \begin{equation}\label{2new}
 E(\delta ) := \{(\rho_{\delta }u-u)(z) \geq  Ab \delta ^{\alpha}\} .
 \end{equation}
By the definition of the Kiselman-Legendre transform at level $\delta^{\alpha}$ (see  \eqref{kisleg})
 $$
 U_{\delta }= \inf _{ t\in [0,\delta ]} \left(\rho_{t }u + K(t^2 -\delta^2) + K(t-\delta) -\delta ^{\alpha }\log\frac{t}{\delta } \right),
 $$
 where $K$ is chosen as in the formula \eqref{kisleg}.
Recall from Lemma \ref{kis} 
that the functions $\rho_{\delta }u+K\delta^2$ are increasing in $\delta$.
By the same lemma 
 $$
 \omega +dd^c U_{\delta } \geq -[(A-1)\delta ^{\alpha }+ 2K\delta ]\omega ,
 $$
 for $0 < \delta  < \delta_0$, where $\delta_0 > 0$ is small enough. We can also assume that 
 \begin{equation}
\label{eq:small-alpha0}
 A\delta_0 ^{\alpha}<1  \quad \mbox{and}\quad  2K\delta _0  ^{1-\al}<Ab,
  \end{equation}
which will be used later.
Therefore
 $$u_{\delta }:=\frac{1}{1+A\delta ^{\alpha }  } U_{\delta }$$
 is  $\omega$-psh on $X$ and satisfies
 $$
 \omega + dd^c u_{\delta} \geq \frac{1}{2}\delta^{\alpha } \omega.
 $$
Note that the definitions of $ U_{\delta }$  and $\theta$ lead to
$$
\inf _{ t\in [0, \theta\delta ]} \left(\rho_{t }u + K(t^2 - \delta^2) + K(t-\delta)-\delta ^{\alpha }\log\frac{t}{\delta }\right)\geq u-K(\de +\de^2) +5Ab\de ^{\al },
$$
and therefore $U_{\delta}$ is larger than
\[
\min \left\{\inf _{ t\in [\theta\delta,\delta ]}\left(\rho_{t }u + K(t^2 -\delta^2) +K(t - \delta)-\delta ^{\alpha }\log\frac{t}{\delta }\right), 
u-K(\de+\de ^2) +5Ab\de ^{\al }\right\}.
 \]
By monotonicity of $\rho_{t}u+Kt^2$ one infers
$$
 U_{\delta } \geq \min \left\{ \rho _{\theta\delta } u -K(\de+ \de ^2), u-K(\de+\de ^2) +5Ab\de ^{\al }\right\}.
$$
Thus, combining this with \eqref{eq:small-alpha0} we have on the set
$$
F(\de ) = \{  \rho_{\theta\delta } u-u \geq 5Ab\de ^{\al } \}$$
that
 \begin{equation}\label{3}
U_{\de } -u \geq 4Ab\de ^{\al }.
\end{equation}
To prove H\"older continuity of $u$ with the exponent $\al $ it is enough to show that $F(\de )$ is empty for $\de$ small enough.
Reasoning by contradiction, we assume that  $F(\de )\neq\emptyset  $.
From \eqref{jen}  we have
 \begin{equation}\label{used}
 \int _X \vert \rho_{\delta }u -u\vert \omega ^n \leq c_1 \delta ^2 , \ \
 \end{equation}
 for $0 < \delta < \delta_0$ (decreasing $\de _0$ if needed).
 Therefore   
 $$
 \int _{E (\de )} \omega ^n \leq \frac{c_1}{Ab} \delta ^{2-\alpha }.
 $$
 Hence, by H\"older's inequality,
 $$
 \int _{E(\de )} f^q \omega ^n < c_2 \delta ^{(2-\alpha ) (p-q)/p }
 $$ 
and
 $$
 \int _{E(\de )} f^r \omega ^n < c_2 \delta ^{(2-\alpha  )(p-r)/p },
 $$
for $r<p.$
Let us define $g$ as in Lemma \ref{stab} with an open set $E$ containing $E(\de )$ and such that 
 $$
 \int _{E} f\omega ^n < c_2 \delta ^{(2-\alpha  )(p-q)/p} 
 $$ 
and
$$
 \int _{E} f^r \omega ^n < c_2 \delta ^{(2-\alpha  )(p-r)/p } .
 $$
Then Lemma \ref{stab} implies  $\|f-g\|_r  \leq c_3    \delta ^{(2-\alpha  )(p-q)/npq }.$
Our stability theorem (and (\ref{2})) give, for $v$ defined by the equations
 $$
 (\omega +dd^c v)^n = g\omega ^n ,\ \ \ \max (u-v)=\max (v-u),
 $$
that
$$
 \| u-v\|_{\infty}  \leq c_4 \delta ^{\alpha ' },
$$
for some $\alpha ' >\alpha.$
The constants $c_j$ are independent of $\de .$ Decreasing $\de _0$ , we finally obtain
 \begin{equation}\label{4}
 \| u-v\|_{\infty}  \leq \frac{Ab}{2} \delta ^{\alpha }, \ \ \ \de <\de _0.
 \end{equation}
 Observe that the choice of $b$ gives
 $$
 U_{\delta }- u_{\delta }\leq \frac{Ab}{2} \delta ^{\alpha }.
 $$
This inequality combined with (\ref{3}) and (\ref{4}), for $z\in F(\de ),$  leads to
$$
(u_{\de } -v)(z ) = [(u_{\de } -U_{\de } ) +(U_{\de } -u)+(u-v)](z)\geq 3Ab\de ^{\al }.
$$
On the other hand, if $g(z)>0$, then $z\notin E$ and therefore $(U_{\de }-u)(z) <  Ab\de ^{\al }$.
Again, applying  (\ref{4}), we obtain
$$
(u_{\de } -v)(z )\leq 2Ab\de ^{\al }.$$
The last two estimates prove that $\max (u_{\de } -v)$ is attained within the  open set $E$. 
However on this set $\om _v ^n =0 < \om _{ u_{\de } } ^n ,$  which contradicts the minimum principle (Proposition \ref{min-prin}).
The theorem thus follows.
\end{proof}

\section{ Sz\'ekelyhidi-Tosatti theorem  on  Hermitian manifolds}

 Sz\'ekelyhidi and Tosatti \cite{SzTo11}  considered  a weak solution to the equation
\[
	(\omega + dd^c u)^n = e^{-F(u,z)}\omega^n
\]
on a compact $n$-dimensional K\"ahler manifold $(X, \omega)$. They proved that if $F(x, z)\in C^\infty(\mathbb R \times X)$ and $u \in PSH(\omega)\cap L^\infty(X)$, then $u$ is smooth. An interesting corollary to this result says that
if $X$ is a Fano manifold and  $\omega$ represents the first Chern class, then setting $F(u,z)=  u-h$ with $h$ satisfying $dd^c h= Ric (\omega ) -\omega$, one  concludes  that a  K\"ahler-Einstein current with bounded potentials is  smooth. Nie [Nie13] recently generalised the result of \cite{SzTo11} proving the same
for  $(X,\omega)$  a compact Hermitian manifold  satisfying 
\begin{equation}
\label{vol-pre-con}
	\int_X \omega^n = \int_X (\omega + dd^cu)^n 
	\quad \forall u \in PSH(\omega)\cap C^\infty(X) .
\end{equation}
This is a restrictive assumption, but
as remarked by Nie \cite[Remark 4.1]{nie13},  the only missing ingredient to remove \eqref{vol-pre-con}
 is the stability theorem for the Monge-Amp\`ere equation on compact Hermitian manifolds. Thanks to higher order estimates  in \cite{nie13} and our results in Section~\ref{S2}  we obtain the  proof of  the Sz\'ekelyhidi-Tosatti theorem  on any  compact Hermitian manifold.
 
\begin{theorem}
\label{regularity-thm}
Let $(X,\omega)$ be a compact $n$-dimensional Hermitian manifold. 
Suppose that $u\in PSH(\omega)\cap L^\infty(X)$ is a
solution of the equation
\[ 
	(\omega + dd^c u)^n = e^{-F(u,z)}\omega^n
\]
in the weak sense of currents, where $F:\mathbb{R}\times X\to \mathbb{R}$ is smooth. 
Then $u$ is smooth.
\end{theorem}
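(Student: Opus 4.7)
The strategy is the bootstrap of Szék\-ely\-hidi--Tosatti, now that the missing ingredient (the stability theorem, Theorem A) is available. Let $\tilde f(z):=e^{-F(u(z),z)}$. Since $u \in L^\infty(X)$ and $F$ is smooth, there exist constants $c_0,C_0>0$ with $c_0\le\tilde f\le C_0$, so $\tilde f\in L^\infty(\omega^n)\subset L^p(\omega^n)$ for every $p$ and is bounded away from zero. The hypotheses of Corollary~\ref{weak-unique}, together with the extension to bounded $\omega$-psh solutions noted in Remark~\ref{bounded-uni}, are therefore satisfied. Since $u$ is already a bounded $\omega$-psh solution of $\omega_u^n=\tilde f\,\omega^n$ (normalization aside), Lemma~\ref{const-comp} forces the normalising constant $c$ to equal $1$, and the uniqueness of bounded solutions identifies $u$ with the continuous solution produced in \cite{KN}. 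Hence $u\in C(X)$.

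Once $u$ is continuous, the composition $\tilde f = e^{-F(u,\cdot\,)}\in C(X)$ is continuous and bounded away from zero. Choose a smoothing $\tilde f_j\in C^\infty(X)$ with $\tilde f_j\ge c_0/2$ and $\tilde f_j\to\tilde f$ uniformly on $X$. By \cite{TW10b} there exist smooth $u_j\in C^\infty(X)$ with $\sup_X u_j=0$ and unique constants $c_j>0$ solving $(\omega+dd^c u_j)^n = c_j \tilde f_j\,\omega^n$ and $\omega+dd^c u_j>0$. Exactly as in Corollary~\ref{weak-unique} one checks that $c_j\to 1$, and then Theorem~A gives $\|u_j-u\|_\infty\to 0$. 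In particular $u$ is the uniform limit of the smooth solutions $u_j$ of the Monge--Amp\`ere equation with right-hand sides $c_j\tilde f_j\,\omega^n$ that are uniformly bounded between positive constants.

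To promote uniform convergence to smoothness, we feed this into Nie's higher-order a priori estimates from \cite{nie13}. Nie establishes, on any compact Hermitian manifold and for the equation with right-hand side $e^{-F(w,z)}\omega^n$, uniform $C^{k,\alpha}$ estimates for smooth $\omega$-psh solutions $w$ in terms of $\|w\|_\infty$ and the $C^k$-norms of $F$; the only place the restrictive condition \eqref{vol-pre-con} entered her argument was through the stability step. Replacing Nie's stability input with our Theorem~A, her second-order (Laplacian) and gradient estimates go through, and the Evans--Krylov plus Schauder bootstrap then yields $C^{k,\alpha}$ bounds for all $k$. Applying these to the smooth solutions $u_j$ of the auxiliary equation $\omega_{u_j}^n=c_j\tilde f_j\omega^n$ (whose right-hand sides are smoothings of $e^{-F(u,\cdot\,)}$ with smooth $u$ one gains iteratively), combined with $\|u_j-u\|_\infty\to 0$ and Arzel\`a--Ascoli on each $C^{k,\alpha}$ ball, we conclude $u\in C^\infty(X)$.

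The main obstacle is the compatibility of the approximation scheme with Nie's a priori bounds: Nie's equation is self-referential in $u$, whereas the natural approximation (smoothing $\tilde f$) freezes the right-hand side. The resolution is a standard iterative smoothing — having upgraded $u$ from $L^\infty$ to $C^0$ via Step~1, each further round of regularity on $u$ (via Nie's $C^{1,\alpha}$, $C^{2,\alpha}$, \ldots bounds) upgrades the regularity of $\tilde f$, and a new application of Theorem~A identifies the regularised solution with $u$, allowing the bootstrap to continue to all orders.
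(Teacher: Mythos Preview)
Your first two steps --- upgrading $u$ from $L^\infty$ to $C^0$ via uniqueness, and then producing smooth $u_j$ with $u_j\to u$ uniformly by solving Monge--Amp\`ere equations with smoothed right-hand sides --- are essentially the paper's argument. The paper's proof is little more than the statement and verification of Lemma~\ref{cj-1}, after which it defers to \cite[Sec.~4]{nie13}.

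The genuine gap is in your third step, and it stems from a seemingly harmless choice in the second. The paper (following \cite{SzTo11} and \cite{nie13}) smooths $u$ itself to smooth $u_k$ and takes the right-hand side to be $c_k e^{-F(u_k,z)}\omega^n$; you instead smooth $\tilde f=e^{-F(u,\cdot)}$ to an arbitrary $\tilde f_j$. This matters because Nie's higher-order a~priori estimates are for the \emph{self-referential} equation $(\omega+dd^c w)^n=e^{-G(w,z)}\omega^n$: in the Laplacian estimate, when one differentiates $\log(\text{RHS})=-G(w,z)$ twice, the resulting terms in $\partial^2 w$ can be absorbed into the left-hand side. Your approximating equation $\omega_{u_j}^n=c_j\tilde f_j\,\omega^n$ has a frozen right-hand side with no such structure, so the second-order estimate for $u_j$ depends on $\|\log\tilde f_j\|_{C^2}$, which blows up as $j\to\infty$ because $\tilde f$ is only continuous. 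The paper's choice avoids this precisely because one can rewrite $c_k e^{-F(u_k,z)}=e^{-F_k(\psi_k,z)}$ with $F_k(t,z)=F(t+u_k-\psi_k,z)-\log c_k$; since $\|u_k-\psi_k\|_\infty\to 0$ (both converge to $u$), the $F_k$ have uniform $C^m$ bounds for every $m$, and Nie's self-referential estimates apply to $\psi_k$ with constants independent of $k$.

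Your proposed ``iterative smoothing'' does not rescue this: even after upgrading $u$ to $C^{0,\alpha}$ (which Theorem~B gives directly), you only get $\tilde f\in C^{0,\alpha}$, and there is no standard route from $C^{0,\alpha}$ right-hand side to a $C^{1,1}$ or $C^{2,\alpha}$ bound on the solution of the complex Monge--Amp\`ere equation without the Yau-type Laplacian estimate, which in turn needs $C^{1,1}$ control of $\log f$. The bootstrap stalls at the first step. The fix is simply to smooth $u$ rather than $\tilde f$, exactly as in Lemma~\ref{cj-1}.
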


\begin{proof}
Without loss of generality we may assume that $\sup_X u =0$. As $u$ is bounded the right hand side is bounded and strictly positive. Hence, by Remark~\ref{bounded-uni}, results in Sections~\ref{S2} and  ~\ref{S3} we know that $u$ is   H\"older continuous  because it coincides with the unique continuous $\omega$-psh solution to the equation 
\[
	(\omega + dd^c u)^n = e^{-F(u,z)}\omega^n, \quad
	\sup_X u =0.
\]

As stated in  \cite[Remark 4.1]{nie13} the argument in this paper  gives the theorem  as soon as the following stability estimate is proven.

\begin{lemma}
\label{cj-1}
Let $u_k$ be smooth functions such that 
\[
	\lim_{k\to +\infty} \|u -u_k\|_\infty =0 .
\]  
By  \cite{TW10b} there exists a unique $\psi_k \in PSH(\omega) \cap C^\infty(X)$ and a unique $c_k >0$ solving
\begin{align*}
	(\omega + dd^c \psi_k)^n = c_k e^{-F(u_k,z)}\omega^n, \quad 
	\sup_X \psi_k =0. 
\end{align*}
Then, 
\[	
	\lim_{k \to +\infty} c_k =1 \quad \mbox{and} \quad
	\lim_{k \to +\infty} \|\psi_k - u\|_\infty =0.
\]
\end{lemma}

\begin{proof}
Observe that $e^{-F(u_k,z)}$ converges uniformly to $e^{-F(u,z)}$. Therefore, the first assertion follows from the proof of Corollary~\ref{weak-unique}. Thus,  we have $\lim_{k \to +\infty} c_k =1$.  It again implies  that $c_k e^{-F(u_k,z)} \in C^\infty(X)$ converges uniformly to $e^{-F(u,z)}$. Therefore the second assertion follows from Theorem~\ref{stability-smooth-lp}.
\end{proof}

Having this lemma the rest of the proof follows the lines of the proof  in \cite[Sec. 4]{nie13}.
\end{proof}

\bigskip

{\noindent Faculty of Mathematics and Computer Science,
Jagiellonian University 30-348 Krak\'ow, \L ojasiewicza 6,
Poland;\\ e-mail: {\tt Slawomir.Kolodziej@im.uj.edu.pl}}\\ \\

{\noindent Faculty of Mathematics and Computer Science,
Jagiellonian University 30-348 Krak\'ow, \L ojasiewicza 6,
Poland;\\ e-mail: {\tt Nguyen.Ngoc.Cuong@im.uj.edu.pl}

\end{document}